
\documentclass[12pt]{amsart}

\usepackage{amsmath}
\usepackage{amssymb}
\usepackage{amsfonts}
\usepackage{amsthm}
\usepackage{enumerate}
\usepackage{hyperref}
\usepackage{color}
\usepackage{graphicx}
\graphicspath{ {images/} }

\textheight=600pt
\textwidth=435pt
\oddsidemargin=17pt
\evensidemargin=17pt

\theoremstyle{plain}
\newtheorem{thm}{Theorem}[section]

\newtheorem{prop}[thm]{Proposition}
\newtheorem{lem}[thm]{Lemma}
\newtheorem{cor}[thm]{Corollary}

\theoremstyle{definition}

\newtheorem{rem}[thm]{Remark}

\newtheorem{dfns-rems}[thm]{Definitions and Remarks}
\newtheorem{notas-rems}[thm]{Notations and Remarks}
\newtheorem{exmps-rems}[thm]{Examples and Remarks}

\makeatletter
\@namedef{subjclassname@2020}{%
  \textup{2020} Mathematics Subject Classification}
\makeatother

\DeclareMathOperator{\ind-match}{ind-match}


\begin{document}


\title[symbolic powers of cover ideals]{On the Castelnuovo-Mumford regularity of symbolic powers of cover ideals}


\author[S. A. Seyed Fakhari]{S. A. Seyed Fakhari}

\address{S. A. Seyed Fakhari, Departamento de Matem\'aticas\\Universidad de los Andes\\Bogot\'a\\Colombia.}

\email{s.seyedfakhari@uniandes.edu.co}


\begin{abstract}
Assume that $G$ is a graph with cover ideal $J(G)$. For every integer $k\geq 1$, we denote the $k$-th symbolic power of $J(G)$ by $J(G)^{(k)}$. We provide a sharp upper bound for the regularity of $J(G)^{(k)}$ in terms of the star packing number of $G$. Also, for any integer $k\geq 2$, we study the difference between ${\rm reg}(J(G)^{(k)})$ and ${\rm reg}(J(G)^{(k-2)})$. As a consequence, we compute the regularity of $J(G)^{(k)}$ when $G$ is a doubly Cohen-Macaulay graph. Furthermore, we determine ${\rm reg}(J(G)^{(k)})$ if $G$ is either a Cameron-Walker graph or a claw-free graph which has no cycle of length other that $3$ and $5$.
\end{abstract}


\subjclass[2020]{Primary: 13D02, 05E40; Secondary: 13C99}


\keywords{Cameron-Walker graph, Castelnuovo-Mumford regularity, Cover ideal, Star Packing number, Symbolic power}


\thanks{}


\maketitle


\section{Introduction} \label{sec1}

Let $S=\mathbb{K}[x_1, \dots, x_n]$ be the polynomial ring over a field $\mathbb{K}$ and let $M$ be a graded $S$-module. Suppose that the minimal graded free resolution of $M$ is given by
$$0\rightarrow \cdots \rightarrow \bigoplus_j S(-j)^{\beta _{1,j}(M)}\rightarrow \bigoplus_j S(-j)^{\beta _{0,j}(M)}\rightarrow M\rightarrow 0.$$
The Castelnuovo-Mumford regularity (or simply, regularity) of $M$, denoted by ${\rm reg}(M)$, is defined as
$${\rm reg}(M)={\rm max}\{j-i\mid \beta _{i,j}(M)\neq 0\}.$$

Computing and finding bounds for the regularity of powers of a monomial ideal have been studied by a number of researchers (see for example \cite{b''}, \cite{bbh}, \cite{c'}, \cite{cht}, \cite{ha}, \cite{hv}, \cite{htt}, \cite{js}, \cite{k}, \cite{sy}, \cite{wo}).

There is a natural correspondence between quadratic squarefree monomial ideals of $S$ and finite simple graphs with $n$ vertices. To every simple graph $G$ with vertex set $V(G)=\{x_1, \ldots, x_n\}$ and edge set $E(G)$, one associates its {\it edge ideal} $I=I(G)$ defined by
$$I(G)=\big(x_ix_j: x_ix_j\in E(G)\big).$$In this paper, we investigate the Alexander dual of edge ideals, namely, the ideal$$J(G)=\bigcap_{\{x_i, x_j\}\in E(G)}(x_i, x_j),$$which is called the {\it cover ideal} of $G$. We refer to \cite{hv2} and \cite{s13} for surveys about homological properties of powers of cover ideals.

Let $I$ is a monomial ideal of $S$. For any integer $k\geq 1$, the $k$-th symbolic power of $I$ is denoted by $I^{(k)}$. Dung et al. \cite{dhnt} prove that the limit $\lim_{k\rightarrow\infty}\frac{{\rm reg}(I^{(k)})}{k}$ exists. In the case that $I=J(G)$ is the cover ideal of a graph $G$, the same authors proved in \cite[Theorem 4.6]{dhnt} that this limit is equal to the maximum value of $\frac{|V(G)|+|N_G(A)|-|A|}{2}$ where $A$ is an independent set of $G$ such that $G\setminus N_G[A]$ has no bipartite connected component. We denote this number by $\gamma(G)$. As the first result of this paper, in Proposition \ref{subgraph} we translate a result of Hien and Trung \cite{ht}, about the regularity of symbolic powers of Stanley-Reisner ideals, to the language of cover ideals and show that $${\rm reg}(J(G)^{(k)})\leq (k-1)\gamma(G)+\max\big\{{\rm reg}(J(H))\mid H \ {\rm is \ a \ subgraph \ of} \  G\big\},$$for any integer $k\geq 1$. As a consequence of the above inequality, we prove in Theorem \ref{claw35} that if $G$ is a claw-free graph which has no cycle of length other that $3$ and $5$, then for every integer $k\geq 1$, we have$${\rm reg}(J(G)^{(k)})=k{\rm deg}(J(G))$$where ${\rm deg}(J(G))$ denotes the maximal degree of minimal monomial generators of $J(G)$.

Let $G$ be a graph with $n$ vertices and let $\alpha_2(G)$ denotes the star packing number of $G$ (see Section \ref{sec2} for the definition of star packing number).  In \cite{fhm1}, Fouli, H${\rm \grave{a}}$ and Morey determined a combinatorial lower bound for the depth of edge ideals of graphs. Indeed, they proved that for every graph $G$, the inequality$${\rm depth}(S/I(G))\geq \alpha_2(G)$$holds (see \cite[Corollary 2.2]{s10} for an alternative proof). Using Auslander-Buchsbaum formula, we deduce that$${\rm pd}(S/I(G))\leq n-\alpha_2(G)$$where ${\rm pd}(S/I(G))$ denotes the projective dimension of $S/I(G)$. Hence, using Terai's theorem \cite[Propostion 8.1.10]{hh}, we obtain the following result.

\begin{prop} \label{star1}
For any graph $G$ with $n$ vertices, we have$${\rm reg}(J(G))\leq n-\alpha_2(G).$$
\end{prop}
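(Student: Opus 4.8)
The plan is to assemble three known facts into a short chain of (in)equalities, since the combinatorial heart of the statement is already packaged inside the depth bound for edge ideals. The guiding observation is that the cover ideal $J(G)$ is exactly the Alexander dual of the edge ideal $I(G)$: by definition $I(G)=(x_ix_j:\{x_i,x_j\}\in E(G))$, and its Alexander dual $I(G)^\vee=\bigcap_{\{x_i,x_j\}\in E(G)}(x_i,x_j)$ is precisely $J(G)$. This duality is what lets me transfer a homological statement about the quotient $S/I(G)$ into a regularity statement about $J(G)$.

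First I would invoke the theorem of Fouli, H\`a and Morey, which supplies the combinatorial lower bound ${\rm depth}(S/I(G))\geq \alpha_2(G)$. Because $S=\mathbb{K}[x_1,\dots,x_n]$ is a polynomial ring in $n$ variables, the Auslander--Buchsbaum formula gives
$${\rm pd}(S/I(G))+{\rm depth}(S/I(G))=n,$$
and hence ${\rm pd}(S/I(G))=n-{\rm depth}(S/I(G))\leq n-\alpha_2(G)$. The final step is to apply Terai's theorem, which for a squarefree monomial ideal $I$ identifies ${\rm reg}(I^\vee)={\rm pd}(S/I)$. Taking $I=I(G)$, so that $I^\vee=J(G)$, yields ${\rm reg}(J(G))={\rm pd}(S/I(G))$, and combining this with the previous inequality delivers the claimed bound ${\rm reg}(J(G))\leq n-\alpha_2(G)$.

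Since every ingredient is a cited result, there is no genuine obstacle to surmount here; the work is entirely in verifying that the hypotheses line up. The two points I would check carefully are that $J(G)$ really is the Alexander dual of $I(G)$ in the direction Terai's theorem requires (so that the regularity of $J(G)$, rather than of some other dual, equals the projective dimension of $S/I(G)$), and that both the depth bound and Terai's identity hold over an arbitrary field $\mathbb{K}$, so that no characteristic hypothesis sneaks in. Granting these compatibility conditions, the inequality follows immediately, and the sharpness claim would be addressed separately by exhibiting a graph attaining equality.
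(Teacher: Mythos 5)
Your argument is exactly the one the paper gives (in the paragraph of the Introduction preceding the proposition): the Fouli--H\`a--Morey depth bound ${\rm depth}(S/I(G))\geq \alpha_2(G)$, the Auslander--Buchsbaum formula to convert this into ${\rm pd}(S/I(G))\leq n-\alpha_2(G)$, and Terai's theorem identifying ${\rm reg}(J(G))$ with ${\rm pd}(S/I(G))$ via Alexander duality. The proposal is correct and follows the same route, with the added (reasonable) care about field-independence of the cited results.
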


In Theorem \ref{star}, we extend the assertion of Proposition \ref{star1} to symbolic powers of $J(G)$, by proving the inequality$${\rm reg}(J(G)^{(k)})\leq (k-1)\gamma(G)+n-\alpha_2(G),$$for each integer $k\geq 1$. The upper bound given by the above inequality for ${\rm reg}(J(G)^{(k)})$ is sharp. Indeed, we will see in Corollary \ref{whisk} that the above inequality is equality if $G$ is a fully clique whiskered graph.

In Section \ref{sec4}, for any integer $k\geq 2$, we study the difference between the regularities of $J(G)^{(k)}$ and $J(G)^{(k-2)}$. In order to estimate this difference, we define $\gamma_0(G)$ to be the maximum value of $\frac{|V(G)|+|N_G(A)|-|A|}{2}$ where $A$ is an independent set of $G$ and any bipartite connected component of $G\setminus N_G[A]$ is an isolated vertex. It is obvious from the definition that $\gamma_0(G)$ is an upper bound for $\gamma(G)$. In Theorem \ref{regsymco}, we show that for any graph $G$ and for any integer $k\geq 2$, the inequality$${\rm reg}(J(G)^{(k)})\leq 2\gamma_0(G)+{\rm reg}(J(G)^{(k-2)})$$holds. As a consequence, we deduce in Corollary \ref{even} that$${\rm reg}(J(G)^{(k)})\leq k\gamma_0(G),$$for each graph $G$ and for any even positive integer $k\geq 1$.

As we mentioned above, for any graph $G$, we have $\gamma(G)\leq \gamma_0(G)$. A natural question is for which classes of graphs we have $\gamma(G)= \gamma_0(G)$. We will show in Proposition \ref{onewell} that equality holds if $G$ is a $1$-well-covered graph (see Section \ref{sec2} for the definition of $1$-well-covered graphs). Moreover, we conclude the equality ${\rm reg}(J(G)^{(k)})=k{\rm deg}(J(G))$, for any $1$-well-covered graph $G$ and for any even integer $k\geq 2$. Using Proposition \ref{onewell}, it will be proven in Theorem \ref{doubcm} that for any doubly Cohen-Macaulay graph $G$ and for each integer $k\geq 1$, we have ${\rm reg}(J(G)^{(k)})=k{\rm deg}(J(G))$. In particular, this equality holds if $G$ is a Gorenstein graph.

Section \ref{sec5} is dedicated to the study of regularity of symbolic powers of cover ideals of Cameron-Walker graphs. Selvaraja \cite[Corollary 4.7]{s}, proves that for any Cameron-Walker graph and for each integer $k\geq 1$, the ideal $J(G)^{(k)}$ has linear quotients. Using this result, we show in Theorem \ref{cwreg}
that ${\rm reg}(J(G)^{(k)})=k{\rm deg}(J(G))$, for any Cameron-Walker graph $G$ and for any integer $k\geq 1$.


\section{Preliminaries} \label{sec2}

In this section, we provide the definitions and basic facts which will be used in the next sections. We refer the reader to \cite{hh} for undefined terminologies.

Let $G$ be a simple graph with vertex set $V(G)=\big\{x_1, \ldots,
x_n\big\}$ and edge set $E(G)$ (by abusing the notation, we identify the edges of $G$ with the corresponding quadratic monomial of $S$). For a vertex $x_i\in V(G)$, the {\it neighbor set} of $x_i$ is defined to be the set $N_G(x_i)=\big\{x_j\mid x_ix_j\in E(G)\big\}$. Moreover, the {\it closed neighborhood} of $x_i$ is $N_G[x_i]=N_G(x_i)\cup \{x_i\}$. For a subset $A\subseteq V(G)$, we set $N_G(A)=\bigcup_{x_i\in A}N_G(x_i)$ and $N_G[A]=\bigcup_{x_i\in A}N_G[x_i]$. The {\it degree} of $x_i$, denoted by ${\rm deg}_G(x_i)$ is the cardinality of $N_G(x_i)$. A vertex of degree one is called a {\it leaf}. An edge $e\in E(G)$ is a {\it pendant edge}, if it is incident to a leaf. A {\it pendant triangle} of $G$ is a triangle $T$ of $G$, with the property that exactly two vertices of $T$ have degree two in $G$. A {\it star triangle} is the graph consisting of finitely many triangles sharing exactly one vertex. The graph $G$ is {\it bipartite} if there exists a partition $V(G)=A\cup B$ such that each edge of $G$ is of the form $x_ix_j$ with $x_i\in A$ and $x_j\in B$. If moreover, every vertex of $A$ is adjacent to every vertex of $B$, then we say that $G$ is a {\it complete bipartite} graph and denote it by $K_{a,b}$, where $a=|A|$ and $b=|B|$. A subgraph $H$ of $G$ is called {\it induced} provided that two vertices of $H$ are adjacent if and only if they are adjacent in $G$. For any $A\subseteq V(G)$, we denote the induced subgraph of $G$ on $A$ by $G[A]$. The graph $K_{1,3}$ is called a {\it claw} and the graph $G$ is said to be {\it claw--free} if it has no claw as an induced subgraph. The cycle graph with $n$ vertices will be denoted by $C_n$. For every subset $A\subset V(G)$, the graph $G\setminus A$ is the graph with vertex set $V(G\setminus A)=V(G)\setminus A$ and edge set $E(G\setminus A)=\{e\in E(G)\mid e\cap A=\emptyset\}$. To simplify the notations, we write $G\setminus x_i$, instead of $G\setminus \{x_i\}$. A subset $W$ of $V(G)$ is said to be an {\it independent subset} of $G$ if there are no edges among the vertices of $W$. The cardinality of the largest independent subset of $G$ is the {\it independence number} of $G$ and is denoted by $i(G)$.
A subset $C$ of $V(G)$ is called a {\it vertex cover} of $G$ if every edge of $G$ is incident to at least one vertex of $C$. A vertex cover $C$ is called a {\it minimal vertex cover} of $G$ if no proper subset of $C$ is a vertex cover of $G$. Note that $C$ is a minimal
vertex cover if and only if $V(G)\setminus C$ is a maximal independent set. For a subset $A\subseteq V(G)$, we set ${\mathbf x}_A$ to be the product $\prod_{x_i\in A}x_i$. It is well-known that$$J(G)=\big({\mathbf x}_C \mid C \ {\rm is \ a \ minimal \ vertex \ cover \ of} \ G\big).$$

The graph $G$ is called {\it well-covered} if all minimal vertex covers of $G$ have the same cardinality. We say that $G$ is {\it 1-well-covered graph} if it is well-covered and moreover, for any vertex $x_i\in V(G)$ the graph $G\setminus x_i$ is well-covered with $i(G\setminus x_i)=i(G)$. The graph $G$ is called {\it Cohen-Macaulay}, {\it sequentially Cohen-Macaulay} or {\it Gorenstein} if the ring $S/I(G)$ has the same property. Furthermore, $G$ is said to be {\it doubly Cohen-Macaulay} if it is Cohen-Macaulay and for any vertex $x_i\in V(G)$ the graph $G\setminus x_i$ is Cohen-Macaulay with $i(G\setminus x_i)=i(G)$.

Let $G$ be a graph. A subset $M\subseteq E(G)$ is a {\it matching} if $e\cap e'=\emptyset$, for every pair of edges $e, e'\in M$. The cardinality of the largest matching of $G$ is called the {\it matching number} of $G$ and is denoted by ${\rm match}(G)$. A matching $M$ of $G$ is an {\it induced matching} of $G$ if for every pair of edges $e, e'\in M$, there is no edge $f\in E(G)\setminus M$ with $f\subset e\cup e'$. The cardinality of the largest induced matching of $G$ is the {\it induced  matching number} of $G$ and is denoted by $\ind-match(G)$.

The graph $G$ is said to be a Cameron-Walker graph if ${\rm match}(G)=\ind-match(G)$. It is clear that a graph is Cameron-Walker if and only if all its connected components are Cameron-Walker. By \cite[Theorem 1]{cw} (see also \cite[Remark 0.1]{hhko}), a connected graph $G$ is a Cameron-Walker graph if and only if

$\bullet$ it is a star graph, or

$\bullet$ it is a star triangle, or

$\bullet$ it consists of a connected bipartite graph $H$ by vertex partition $V(H)=X\cup Y$ with the property that  there is at least one pendant edge attached  to each  vertex of $X$ and there may be some pendant triangles attached to each vertex of $Y$.

Let $G$ be a graph and let $x_i$ be a vertex of $G$. The subgraph ${\rm St}(x_i)$ of $G$ with vertex set $N_G[x_i]$ and edge set $\{x_ix_j\mid x_j\in N_G(x_i)\}$ is called a {\it star with center $x_i$}. A {\it star packing} of $G$ is a family $\mathcal{X}$ of stars in $G$ which are pairwise disjoint, i.e., $V({\rm St}(x_i))\cap V({\rm St}(x_j))=\emptyset$, for ${\rm St}(x_i), {\rm St}(x_j)\in \mathcal{X}$ with $x_i\neq x_j$. The quantity$$\max\big\{|\mathcal{X}|\, |\, \mathcal{X} \ {\rm is \ a \ star \ packing \ of} \ G\big\}$$ is called the {\it star packing number} of $G$. Following \cite{fhm1}, we denote the star packing number of $G$ by $\alpha_2(G)$.

A {\it simplicial complex} $\Delta$ on the set of vertices $[n]:=\{1,
\ldots,n\}$ is a collection of subsets of $[n]$ which is closed under
taking subsets; that is, if $F \in \Delta$ and $F'\subseteq F$, then also
$F'\in\Delta$. Every element $F\in\Delta$ is called a {\it face} of
$\Delta$. A {\it facet} of $\Delta$ is a maximal face
of $\Delta$ with respect to inclusion. The
set of facets of $\Delta$ will be denoted by $\mathcal{F}(\Delta)$. The {\it
Stanley--Reisner} ideal of $\Delta$ is the ideal $I_{
\Delta}$ of $S$ which is defined as$$I_{\Delta}=\big(\prod_{x_i\in F}x_i\mid F\subseteq [n] \ {\rm and} \ F\notin \Delta\big).$$By \cite[Lemma 1.5.4]{hh},$$I_{\Delta}=\bigcap_{F\in \mathcal{F}(\Delta)}(x_i \mid i\in [n]\setminus F).$$

Assume that $I$ is an ideal of $S$ and ${\rm Min}(I)$ is the set of minimal primes of $I$. For every integer $k\geq 1$, the $k$-th {\it symbolic power} of $I$,
denoted by $I^{(k)}$, is defined to be$$I^{(k)}=\bigcap_{\frak{p}\in {\rm Min}(I)} {\rm Ker}(S\rightarrow (S/I^k)_{\frak{p}}).$$Let $I$ be a squarefree monomial ideal with irredundant
primary decomposition $$I=\frak{p}_1\cap\ldots\cap\frak{p}_r,$$ where every
$\frak{p}_i$ is a prime ideal generated by a subset of the variables. It follows from \cite[Proposition 1.4.4]{hh} that for every integer $k\geq 1$, $$I^{(k)}=\frak{p}_1^k\cap\ldots\cap
\frak{p}_r^k.$$In particular, for every graph $G$, we have $$J(G)^{(k)}=\bigcap_{x_ix_j\in E(G)}(x_i, x_j)^k,$$for every integer $k\geq 1$.

For a monomial ideal $I$ its {\it degree}, denoted by ${\rm deg}(I)$ is the maximum degree of its minimal monomial generators. Thus, in particular, ${\rm deg}(J(G))$ is the cardinality of the largest minimal vertex cover of $G$. The monomial ideal $I$ is said to have a {\it linear resolution} if it is generated in a single degree and ${\rm reg}(I)={\rm deg}(I)$. A characterization of graphs $G$ such that $J(G)^{(k)}$ has a linear resolution for some (equivalently, for all) integer $k\geq 2$ is provided in \cite{s11}.

For a monomial ideal $I$ and for an integer $j$, let $I_{\langle j\rangle}$
denote the ideal generated by all monomials of degree $j$
belonging to $I$. The monomial ideal $I$ is called {\it
componentwise linear} if $I_{\langle j\rangle}$ has a linear resolution for
all $j$. We know from \cite[Corollary 8.2.14]{hh} that if $I$ is a componentwise linear ideal, then ${\rm reg}(I)={\rm deg}(I)$. One says that the monomial ideal $I$ has {\it linear quotients} if the minimal monomial generators of $I$ can be ordered as $u_1, u_2, \ldots, u_m$ such that for every $2\leq i\leq m$, the ideal $(u_1, \ldots, u_{i-1}):u_i$ is generated by a subset of the variables. By \cite[Theorem 8.2.15]{hh} any monomial ideal $I$ which has linear quotients is componentwise linear and so, ${\rm reg}(I)={\rm deg}(I)$.


\section{Regularity, subgraphs and star packing numbers} \label{sec3}

As we mentioned in the Introduction, Dung et al. \cite{dhnt} prove that for any monomial ideal $I$, the limit $\lim_{k\rightarrow\infty}\frac{{\rm reg}(I^{(k)})}{k}$ exists. Moreover, we know from \cite[Theorem 4.6]{dhnt} that if $I=J(G)$ is the cover ideal of a graph $G$, then this limit is equal to
\begin{align*}
\gamma(G):=\max\bigg\{& \frac{|V(G)|+|N_G(A)|-|A|}{2}\mid A \ {\rm is \ an \ independent \ set \ of} \ G \ {\rm and} \\ & \ G\setminus N_G[A] \ {\rm has \ no \ bipartite \ connected \ component}\bigg\}.
\end{align*}
The goal of this section is to provide a sharp upper bound for the regularity of $J(G)^{(k)}$ in terms of $\gamma(G)$ and $\alpha_2(G)$. Meanwhile, we also compute the regularity of symbolic powers of cover ideals of claw-free graphs which have no cycle of length other that $3$ and $5$. Our first result is the following proposition which is essentially a special case of \cite[Theorem 2.3]{ht}.

\begin{prop} \label{subgraph}
Let $G$ be a graph. Then for every integer $k\geq 1$, we have$${\rm reg}(J(G)^{(k)})\leq (k-1)\gamma(G)+\max\big\{{\rm reg}(J(H))\mid H \ {\rm is \ a \ subgraph \ of} \  G\big\}.$$
\end{prop}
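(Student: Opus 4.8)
The plan is to recognize $J(G)$ as a Stanley-Reisner ideal and then invoke \cite[Theorem 2.3]{ht} directly, translating its combinatorial ingredients into graph-theoretic data.

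First I would record that, being a squarefree monomial ideal, $J(G)$ is the Stanley-Reisner ideal $I_\Delta$ of a unique simplicial complex $\Delta$ on the vertex set $V(G)$. Concretely, since the minimal primes of $J(G)$ are exactly the ideals $(x_i,x_j)$ with $\{x_i,x_j\}\in E(G)$, the description $I_\Delta=\bigcap_{F\in\mathcal{F}(\Delta)}(x_i\mid i\in[n]\setminus F)$ forces the facets of $\Delta$ to be the complements $V(G)\setminus\{x_i,x_j\}$ of the edges of $G$; equivalently, $\Delta$ is the Alexander dual of the independence complex of $G$. With this dictionary in place, \cite[Theorem 2.3]{ht} applies verbatim to the symbolic powers $J(G)^{(k)}=I_\Delta^{(k)}$.

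Next I would invoke the bound supplied by \cite[Theorem 2.3]{ht}, which has the shape ${\rm reg}(I_\Delta^{(k)})\leq (k-1)\epsilon+\max\{{\rm reg}(I_\Gamma)\mid \Gamma\in\mathcal{S}\}$, where $\epsilon=\lim_{k\to\infty}{\rm reg}(I_\Delta^{(k)})/k$ is the limiting slope and $\mathcal{S}$ is the family of subcomplexes of $\Delta$ produced by that theorem. For the first summand I would simply appeal to \cite[Theorem 4.6]{dhnt}: the limit of ${\rm reg}(J(G)^{(k)})/k$ equals $\gamma(G)$, so $\epsilon=\gamma(G)$, which accounts for the term $(k-1)\gamma(G)$.

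The remaining and principal task is to control $\max\{{\rm reg}(I_\Gamma)\mid \Gamma\in\mathcal{S}\}$. Under the correspondence above, passing from $G$ to a subgraph $H$ (deleting edges and/or vertices) discards minimal primes of $J(G)$, hence enlarges the ideal and shrinks the associated complex; so the Stanley-Reisner complex of $J(H)$ is a subcomplex of $\Delta$, and conversely each relevant $\Gamma\in\mathcal{S}$ should be realized as the complex of $J(H)$ for a suitable subgraph $H\subseteq G$, whence $I_\Gamma=J(H)$. I would therefore verify that every $\Gamma$ arising in \cite[Theorem 2.3]{ht} corresponds to a subgraph of $G$, yielding $\max\{{\rm reg}(I_\Gamma)\mid\Gamma\in\mathcal{S}\}\leq \max\{{\rm reg}(J(H))\mid H\text{ is a subgraph of }G\}$, and then substitute. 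The main obstacle is precisely this translation: matching the restriction and localization operations that generate $\mathcal{S}$ on the Stanley-Reisner side with the operation of passing to subgraphs on the graph side, and confirming that the notion of \emph{subgraph}, rather than merely induced subgraph, is the correct one to capture all members of $\mathcal{S}$ (regularity being unaffected by the deletion of variables allows vertex deletions to be absorbed). Combining the two summands gives the asserted inequality.
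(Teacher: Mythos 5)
Your proposal is correct and follows essentially the same route as the paper: identify $J(G)=I_\Delta$ with $\mathcal{F}(\Delta)=\{V(G)\setminus\{x_i,x_j\}\mid x_ix_j\in E(G)\}$, apply \cite[Theorem 2.3]{ht}, and observe that the subcomplexes $\Gamma$ appearing there (namely those with $\mathcal{F}(\Gamma)\subseteq\mathcal{F}(\Delta)$) are exactly the complexes of cover ideals $J(H)$ of spanning subgraphs $H\subseteq G$, obtained by keeping the edges whose complements are facets of $\Gamma$. The translation step you flag as the main obstacle is precisely the short verification the paper carries out, so no gap remains.
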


\begin{proof}
As $J(G)$ is a squarefree monomial ideal, there is simplicial complex $\Delta$ with $J(G)=I_{\Delta}$. We know from \cite[Theorem 2.3]{ht} that ${\rm reg}(J(G)^{(k)})$ is bounded above by$$(k-1)\gamma(G)+\max\big\{{\rm reg}(I_{\Gamma})\mid \Gamma \ {\rm is \ a \ subcomplex \ of} \  \Delta \ {\rm with} \ \mathcal{F}(\Gamma)\subseteq \mathcal{F}(\Delta)\big\}.$$Thus, it is enough to show that for any subcomplex of $\Delta$ with $\mathcal{F}(\Gamma)\subseteq \mathcal{F}(\Delta)$, we have $I_{\Gamma}=J(H)$, for some subgraph $H$ of $G$.

Suppose $V(G)=\{x_1, \ldots, x_n\}$. Note that$$\mathcal{F}(\Delta)=\big\{V(G)\setminus\{x_i, x_j\}\mid x_ix_j\in E(G)\big\}.$$Assume that $\Gamma$ is a subcomplex of $\Delta$ with $\mathcal{F}(\Gamma)\subseteq \mathcal{F}(\Delta)$ and let $H$ be the subgraph of $G$ with $V(H)=V(G)$ and$$E(H)=\big\{x_ix_j\mid V(G)\setminus\{x_i, x_j\}\in \mathcal{F}(\Gamma)\big\}.$$Then one can easily see that $I_{\Gamma}=J(H)$.
\end{proof}

Using Proposition \ref{subgraph}, we are able to compute the regularity of cover ideals of claw-free graphs which have no cycle other than $C_3$ and $C_5$. We first need to compare the degree of cover ideal of a claw-free graph with that of its subgraphs.

\begin{lem} \label{claw}
Let $G$ be a claw-free graph. Then for any subgraph $H$ of $G$, we have ${\rm deg}(J(H))\leq {\rm deg}(J(G))$.
\end{lem}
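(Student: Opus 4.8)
The plan is to translate the statement into the language of vertex covers and domination, and then to invoke the classical theorem of Allan and Laskar that in a claw-free ($K_{1,3}$-free) graph the domination number coincides with the independent domination number. Recall from Section \ref{sec2} that for any graph $F$ a subset $C$ is a minimal vertex cover if and only if $V(F)\setminus C$ is a maximal independent set, and that ${\rm deg}(J(F))$ equals the cardinality of the largest minimal vertex cover of $F$. Passing to complements, ${\rm deg}(J(F))$ equals $|V(F)|$ minus the minimum cardinality of a maximal independent set of $F$ (equivalently, of an independent dominating set of $F$). Thus the content of the lemma is exactly an inequality between these ``smallest maximal independent set'' invariants of $H$ and $G$, and the claw-free hypothesis must be what makes it go the right way.

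First I would fix $W=V(H)$ and choose a maximal independent set $I$ of $H$ of minimum cardinality, so that $|W|-|I|={\rm deg}(J(H))$. Since $I$ is a maximal independent set of $H$, it dominates $W$ in $H$; and because every edge of $H$ is an edge of $G$, the set $I$ continues to dominate $W$ inside the induced subgraph $G[W]$. Hence $I$ is a dominating set of $G[W]$, which shows that the domination number of $G[W]$ is at most $|I|$.

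The key step is to replace this (generally non-independent) dominating set by an \emph{independent} dominating set of no larger cardinality, and this is precisely where claw-freeness enters. As an induced subgraph of the claw-free graph $G$, the graph $G[W]$ is itself claw-free, so by the Allan--Laskar theorem its domination number and its independent domination number agree. Consequently $G[W]$ has a maximal independent set $I'$ with $|I'|\le |I|$. I would then extend $I'$ to a maximal independent set $I''$ of $G$; as $I'$ already dominates $W$, no vertex of $W$ can be adjoined, so $I''=I'\cup R$ with $R\subseteq V(G)\setminus W$ and $|R|\le |V(G)|-|W|$. The complement $V(G)\setminus I''$ is a minimal vertex cover of $G$, and a direct count gives
$${\rm deg}(J(G))\ge |V(G)|-|I''|\ge |V(G)|-|I|-\big(|V(G)|-|W|\big)=|W|-|I|={\rm deg}(J(H)),$$
which is the claimed inequality.

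The only genuine obstacle is the middle step --- converting a dominating set into an independent dominating set without increasing its size --- and it is exactly here that the hypothesis cannot be dropped, since for arbitrary graphs the domination and independent domination numbers differ and the desired bound can fail. The remaining bookkeeping is routine: isolated vertices of $H$ lie in every maximal independent set and in no minimal vertex cover, so they do not disturb the identity expressing ${\rm deg}(J(F))$ as $|V(F)|$ minus the minimum size of a maximal independent set, and the extension of $I'$ across $V(G)\setminus W$ is the standard greedy completion to a maximal independent set.
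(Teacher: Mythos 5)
Your proof is correct, but it takes a genuinely different route from the paper's. You translate ${\rm deg}(J(-))$ into the ``minimum maximal independent set'' (independent domination) invariant, observe that a minimum maximal independent set of $H$ remains a dominating set of the claw-free induced subgraph $G[V(H)]$, and then invoke the Allan--Laskar theorem ($\gamma=i$ for $K_{1,3}$-free graphs) as a black box to convert it into an independent dominating set of no larger size, finishing with a greedy extension and a count; each step, including the handling of isolated vertices and the containment of the extension in $V(G)\setminus V(H)$, checks out. The paper instead argues by contradiction and is entirely self-contained: it takes the complement $A$ of a too-large minimal vertex cover of $H$, a maximal independent set $B$ of $G[A]$ extended to a maximal independent set $B'$ of $G$, and uses a pigeonhole count $|B'\setminus B|>|A\setminus B|$ to exhibit an explicit claw $z,w,x,y$. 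The two arguments share the same combinatorial core (the paper is essentially re-deriving the relevant instance of the Allan--Laskar swapping idea in situ), so your version buys brevity and a conceptual explanation of exactly where claw-freeness is used, at the cost of importing an external classical theorem that the paper does not cite; the paper's version buys self-containment and an explicit witness (the claw) for the failure in the non-claw-free case, matching the counterexample given in the remark that follows the lemma.
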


\begin{proof}
By adding isolated vertices to $H$ (if necessary), we may assume that $V(H)=V(G)$. Let $n$ denote the number of vertices of $G$. By contradiction, suppose that ${\rm deg}(J(H))> {\rm deg}(J(G))$. So, $H$ has a minimal vertex cover $C$ with $|C|> {\rm deg}(J(G))$. Set $A:=V(H)\setminus C$. Then $A$ is a maximal independent set of $H$. Let $B$ be a maximal independent set of $G[A]$. Then $B$ is contained in a maximal independent set $B'$ of $G$. Since $A$ is a maximal independent set of $H$, we conclude that any vertex in $B'\setminus B$ is adjacent in $G$ to at least one vertex in $A\setminus B$. On the other hand, $V(G)\setminus B'$ is a minimal vertex cover of $G$. Therefore,$$n-|B'|\leq {\rm deg}(J(G))< |C|=n-|A|.$$Hence, $|B'|>|A|$ which implies that $|B'\setminus B|>|A\setminus B|$. Thus, there are two vertices $x, y\in B'\setminus B$ which are adjacent in $G$ to a vertex $w\in A\setminus B$. But since $B$ is a maximal independent set of $G[A]$, we deduce that $B\cup \{w\}$ is not an independent set of $G$. Hence, there is a vertex $z\in B$ with $zw\in E(G)$. As a consequence, the vertices $z, w, x, y$ form a claw in $G$ which is a contradiction.
\end{proof}

\begin{rem}
The assertion of Lemma \ref{claw} is not true if $G$ is not claw-free. For instance, assume that $G$ is the graph with vertex set $V(G)=\{x_1, \ldots, x_6\}$ and edge set $E(G)=\{x_1x_2, x_1x_3, x_1x_4, x_2x_5, x_2x_6\}$. Also, let $H$ be the graph obtained from $G$ by deleting the edge $x_1x_2$. Then one can easily check that ${\rm deg}(J(G))=3$ and ${\rm deg}(J(H))=4$.
\end{rem}

In the following theorem, we prove that if $G$ is a claw-free graph which has no cycle of length other than $3$ and $5$, then ${\rm reg}(J(G)^{(k)})$ is as small as possible.

\begin{thm} \label{claw35}
Let $G$ be a claw-free graph which has no cycle other than $C_3$ and $C_5$. Then for every integer $k\geq 1$, we have$${\rm reg}(J(G)^{(k)})=k{\rm deg}(J(G)).$$
\end{thm}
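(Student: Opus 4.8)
The plan is to sandwich ${\rm reg}(J(G)^{(k)})$ between $k\,{\rm deg}(J(G))$ from below and the bound furnished by Proposition \ref{subgraph} from above, and to show that for graphs in this class the upper bound collapses to $k\,{\rm deg}(J(G))$ as well. For the lower bound I would first record the general fact that ${\rm reg}(M)\geq {\rm deg}(M)$ for any monomial ideal $M$ (a top-degree minimal generator contributes a nonzero Betti number $\beta_{0,\bullet}$), so it suffices to exhibit a minimal generator of $J(G)^{(k)}$ of degree $k\,{\rm deg}(J(G))$. Taking $C$ to be a minimal vertex cover of $G$ with $|C|={\rm deg}(J(G))$, the monomial ${\mathbf x}_C^{\,k}=\prod_{x_i\in C}x_i^{k}$ lies in $J(G)^{(k)}=\bigcap_{x_ix_j\in E(G)}(x_i,x_j)^k$, and minimality of $C$ (each $x_i\in C$ has a neighbour outside $C$) forces ${\mathbf x}_C^{\,k}$ to be a minimal generator; hence ${\rm reg}(J(G)^{(k)})\geq k\,{\rm deg}(J(G))$.

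For the upper bound I would invoke Proposition \ref{subgraph} and reduce to two claims: (A) ${\rm reg}(J(H))\leq {\rm deg}(J(G))$ for every subgraph $H$ of $G$, and (B) $\gamma(G)\leq {\rm deg}(J(G))$. For (A), note that any subgraph $H$ again has no cycle of length other than $3$ or $5$, so by Woodroofe's theorem its independence complex is vertex decomposable, in particular $H$ is sequentially Cohen--Macaulay; by the Herzog--Hibi correspondence $J(H)$ is then componentwise linear, whence ${\rm reg}(J(H))={\rm deg}(J(H))$, and Lemma \ref{claw} gives ${\rm deg}(J(H))\leq {\rm deg}(J(G))$. Together with the case $H=G$ this yields $\max_H {\rm reg}(J(H))={\rm deg}(J(G))$.

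The crux is (B). I would fix an independent set $A$ with $W:=V(G)\setminus N_G[A]$ such that $G[W]$ has no bipartite connected component, and build a small maximal independent set of $G$. Since no component of $G[W]$ is bipartite, none is an isolated vertex, so every component is a connected claw-free graph on at least two vertices; by the Sumner--Las Vergnas matching theorem each such component $H$ has a matching saturating all but at most one vertex. In the even case this is a perfect matching, forcing every independent set of $H$ to have size $\leq |V(H)|/2$; in the odd case, choosing a neighbour $u$ of the single exposed vertex $w$ and extending $\{u\}$ to a maximal independent set of $H$ produces one that misses $w$, hence lies among the matched vertices and again has size $\leq (|V(H)|-1)/2$. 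Summing over components gives a maximal independent set $I_W$ of $G[W]$ with $|I_W|\leq |W|/2$. Then $A\cup I_W$ is a maximal independent set of $G$ (it is independent since $W$ has no neighbours in $A$, and dominating since $A$ covers $N_G[A]$ and $I_W$ covers $W$), so its complement is a minimal vertex cover of size at least $|V(G)|-|A|-|W|/2=\tfrac{1}{2}\big(|V(G)|+|N_G(A)|-|A|\big)$. As $A$ was an arbitrary admissible set, this proves (B).

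Combining (A), (B) and Proposition \ref{subgraph} gives ${\rm reg}(J(G)^{(k)})\leq (k-1)\gamma(G)+{\rm deg}(J(G))\leq (k-1){\rm deg}(J(G))+{\rm deg}(J(G))=k\,{\rm deg}(J(G))$, matching the lower bound. I expect the odd-order case of (B) to be the main obstacle: the perfect-matching argument only bounds the independence number, so for odd components one must instead manufacture a \emph{specific} small maximal independent set, which is exactly the role of the ``neighbour of the exposed vertex'' device; a secondary point to check is that excluding bipartite components of $G[W]$ genuinely removes all isolated vertices, so that the Sumner--Las Vergnas hypothesis applies to every component.
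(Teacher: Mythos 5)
Your proof is correct, and its skeleton coincides with the paper's: both arguments obtain the lower bound from the top-degree minimal generator, and both deduce the upper bound from Proposition \ref{subgraph} by showing that every subgraph $H$ is sequentially Cohen--Macaulay via Woodroofe's theorem \cite[Theorem 1]{wo2}, hence ${\rm reg}(J(H))={\rm deg}(J(H))\leq {\rm deg}(J(G))$ by componentwise linearity and Lemma \ref{claw}. The genuine difference is in how the two ingredients outside Proposition \ref{subgraph} are supplied. The paper simply cites \cite[Lemma 3.1]{s2} for the inequality ${\rm reg}(J(G)^{(k)})\geq k\,{\rm deg}(J(G))$ and \cite[Theorem 4.9]{dhnt} for $\gamma(G)={\rm deg}(J(G))$, whereas you reprove both from scratch: the lower bound by exhibiting ${\mathbf x}_C^{\,k}$ as a minimal generator (your minimality argument, using that each vertex of a minimal cover has a neighbour outside the cover, is sound), and the bound $\gamma(G)\leq {\rm deg}(J(G))$ by a Sumner--Las Vergnas matching argument inside the non-bipartite components of $G\setminus N_G[A]$, producing a maximal independent set $A\cup I_W$ whose complement is a large minimal vertex cover. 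Your arithmetic $|V(G)|-|A|-|W|/2=\tfrac{1}{2}(|V(G)|+|N_G(A)|-|A|)$ checks out since $A$ is independent, so $|N_G[A]|=|A|+|N_G(A)|$. What your route buys is a self-contained combinatorial proof of the claw-free case of \cite[Theorem 4.9]{dhnt} at the cost of extra length; what the paper's route buys is brevity and uniformity with Corollary \ref{buc}, where the same citation covers bipartite and well-covered graphs as well. One small point to make precise: the Sumner--Las Vergnas theorem as usually stated gives a perfect matching only for connected claw-free graphs of \emph{even} order; for an odd-order component you should note that deleting a non-cut vertex leaves a connected claw-free graph of even order, which yields the near-perfect matching you use. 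This is routine, so it is a matter of exposition rather than a gap.
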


\begin{proof}
We know from \cite[Lemma 3.1]{s2} that ${\rm reg}(J(G)^{(k)})\geq k{\rm deg}(J(G))$. To prove the reverse inequality, note that by \cite[Theorem 4.9]{dhnt}, we have $\gamma(G)={\rm deg}(J(G))$. Thus, using Proposition \ref{subgraph}, it is enough to show that for any subgraph $H$ of $G$, the inequality ${\rm reg}(J(H))\leq {\rm deg}(J(G))$ holds. Observe that $H$ has no cycle other that $C_3$ and $C_5$. Thus, \cite[Theorem 1]{wo2} implies that $H$ is a sequentially Cohen-Macaulay graph. Therefore, we conclude from \cite[Theorem 8.2.20]{hh} that $J(H)$ is componentwise linear. Consequently, it follows from \cite[Corollary 8.2.14]{hh} and Lemma \ref{claw} that$${\rm reg}(J(H))={\rm deg}(J(H))\leq {\rm deg}(J(G)).$$
\end{proof}

In the following theorem, for any graph $G$, we determine a sharp upper bound for ${\rm reg}(J(G)^{(k)})$, in terms of $\gamma(G)$ and $\alpha_2(G)$.

\begin{thm} \label{star}
Let $G$ be a graph with $n$ vertices. Then for every integer $k\geq 1$, we have$${\rm reg}(J(G)^{(k)})\leq (k-1)\gamma(G)+n-\alpha_2(G).$$
\end{thm}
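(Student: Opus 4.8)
The plan is to combine the two main inputs that the paper has already prepared: Proposition \ref{subgraph}, which controls ${\rm reg}(J(G)^{(k)})$ by $(k-1)\gamma(G)$ plus the maximum regularity of $J(H)$ over subgraphs $H$ of $G$, and Proposition \ref{star1}, which bounds ${\rm reg}(J(H))$ for a single graph $H$ on $m$ vertices by $m-\alpha_2(H)$. Starting from Proposition \ref{subgraph}, it therefore suffices to show that
\[
\max\big\{{\rm reg}(J(H))\mid H \ {\rm is \ a \ subgraph \ of} \  G\big\}\leq n-\alpha_2(G).
\]
First I would fix an arbitrary subgraph $H$ of $G$. As in the proof of Lemma \ref{claw}, I may add isolated vertices so that $V(H)=V(G)$ and $H$ also has $n$ vertices; adding isolated vertices changes neither ${\rm reg}(J(H))$ nor $\alpha_2(H)$, since an isolated vertex contributes no star to a packing and enters $J(H)$ trivially. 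Applying Proposition \ref{star1} to $H$ then gives ${\rm reg}(J(H))\leq n-\alpha_2(H)$.

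The key remaining step is to compare the star packing numbers of $G$ and its subgraph $H$. I claim that $\alpha_2(H)\leq\alpha_2(G)$, so that $n-\alpha_2(H)\leq n-\alpha_2(G)$ and the desired bound follows uniformly over all $H$. The point is monotonicity of $\alpha_2$ under passing to subgraphs, which should hold essentially by definition: if $\mathcal{X}$ is a star packing of $H$, then every star ${\rm St}_H(x_i)$ in the family is a subgraph of the corresponding star ${\rm St}_G(x_i)$ (since $N_H(x_i)\subseteq N_G(x_i)$), and the vertex sets $V({\rm St}_H(x_i))\subseteq V({\rm St}_G(x_i))$ need not stay pairwise disjoint in $G$ because the larger $G$-stars can overlap. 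This is the main obstacle: a star packing of $H$ is not automatically a star packing of $G$. The fix is to realize a packing directly in $G$ of at least the same size. Given a maximum star packing of $H$ with centers $x_{i_1},\dots,x_{i_t}$, I would keep the same centers and instead use, for each center, the full $G$-star but \emph{restricted} to the vertices already used by that center in $H$; more cleanly, I retain the $H$-stars themselves as stars of $G$ (each is a subgraph of $G$ of the form ``center joined to some neighbors'', hence is contained in a star of $G$ with the same center) and observe that their vertex sets are pairwise disjoint \emph{in $H$}, and disjointness of vertex sets is a property of the sets themselves, independent of the ambient graph. Thus the same $t$ pairwise disjoint vertex sets still witness a star packing of $G$ with $t$ stars, giving $\alpha_2(G)\geq t=\alpha_2(H)$.

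Assembling the pieces: for every subgraph $H$ (after padding to $n$ vertices) we have ${\rm reg}(J(H))\leq n-\alpha_2(H)\leq n-\alpha_2(G)$, so the maximum over all subgraphs is at most $n-\alpha_2(G)$, and substituting this into Proposition \ref{subgraph} yields
\[
{\rm reg}(J(G)^{(k)})\leq (k-1)\gamma(G)+n-\alpha_2(G),
\]
as required. The only genuinely nontrivial ingredient is the monotonicity $\alpha_2(H)\leq\alpha_2(G)$; everything else is a direct citation of the already-established Propositions \ref{subgraph} and \ref{star1} together with the harmless bookkeeping about isolated vertices.
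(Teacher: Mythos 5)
Your overall strategy is exactly the paper's: feed Proposition \ref{star1} into Proposition \ref{subgraph} and reduce everything to comparing $\alpha_2(H)$ with $\alpha_2(G)$ for a spanning subgraph $H$ of $G$. But the comparison step contains a genuine error. To conclude ${\rm reg}(J(H))\leq n-\alpha_2(H)\leq n-\alpha_2(G)$ you need $\alpha_2(H)\geq\alpha_2(G)$; you instead claim $\alpha_2(H)\leq\alpha_2(G)$ and then assert that this yields $n-\alpha_2(H)\leq n-\alpha_2(G)$, which reverses the inequality. Worse, the statement $\alpha_2(H)\leq\alpha_2(G)$ is false in general: take $G=C_4$ with vertices $a,b,c,d$, where every closed neighborhood has three of the four vertices and hence any two stars of $G$ meet, so $\alpha_2(G)=1$; the spanning subgraph $H$ with edge set $\{ab,cd\}$ has the two disjoint stars ${\rm St}_H(a)=\{a,b\}$ and ${\rm St}_H(c)=\{c,d\}$, so $\alpha_2(H)=2$. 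Your attempted proof of the claim fails because, in the paper's definition, a star of $G$ centered at $x_i$ is forced to have vertex set all of $N_G[x_i]$; you cannot ``retain the $H$-stars as stars of $G$,'' since a center joined to only some of its $G$-neighbors is not a star of $G$, and once each is enlarged to the full $G$-star the disjointness you were relying on can be lost (as it is in the example).

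The inequality you actually need, $\alpha_2(G)\leq\alpha_2(H)$, is the one with the easy one-line proof, and it is the one the paper uses: take a largest star packing of $G$ with set of centers $\mathcal{S}$ and consider the stars of $H$ centered at the same vertices. Since $N_H[x_i]\subseteq N_G[x_i]$ for each $x_i\in\mathcal{S}$, these are genuine stars of $H$ whose vertex sets are contained in the pairwise disjoint sets $N_G[x_i]$, hence are themselves pairwise disjoint, giving a star packing of $H$ of size $|\mathcal{S}|=\alpha_2(G)$. In short, $\alpha_2$ is monotone \emph{increasing}, not decreasing, under passing to spanning subgraphs; with the direction corrected in this way, the rest of your argument goes through verbatim and coincides with the paper's proof.
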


\begin{proof}
According to Proposition \ref{subgraph}, it is enough to show that for any subgraph $H$ of $G$, we have$${\rm reg}(J(H))\leq n-\alpha_2(G).$$By adding isolated vertices to $H$ (if necessary), we may suppose that $V(H)=V(G)$. Let $\mathcal{S}$ be the set of the centers of stars in a largest star packing of $G$. In particular, $|\mathcal{S}|=\alpha_2(G)$. As $E(H)\subseteq E(G)$, the stars in $H$ centered at the vertices of $\mathcal{S}$ form a star packing of $H$. Consequently, $\alpha_2(G)\leq \alpha_2(H)$. It follows from Proposition \ref{star1} that$${\rm reg}(J(H))\leq n-\alpha_2(H)\leq n-\alpha_2(G),$$and this completes the proof.
\end{proof}

The following corollary is a consequence of Theorem \ref{star} and improves \cite[Theorems 3.4, 3.6 and 3.7]{s2}.

\begin{cor} \label{buc}
Let $G$ be a graph with $n$ vertices which belongs to either of the following families:
\begin{itemize}
\item[(i)] bipartite graphs;
\item[(ii)] well-covered graphs;
\item[(iii)] claw-free graphs.
\end{itemize}
Then for every integer $k\geq 1$, we have$${\rm reg}(J(G)^{(k)})\leq (k-1){\rm deg}(J(G))+n-\alpha_2(G).$$
\end{cor}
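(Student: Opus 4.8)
The plan is to deduce the corollary from Theorem~\ref{star}, which already gives ${\rm reg}(J(G)^{(k)})\le (k-1)\gamma(G)+n-\alpha_2(G)$, by showing that for each of the three families one has $\gamma(G)\le {\rm deg}(J(G))$; substituting this inequality into the coefficient of $k-1$ yields exactly the stated bound. To prove $\gamma(G)\le {\rm deg}(J(G))$ I would argue combinatorially, using that ${\rm deg}(J(G))$ is the largest size of a minimal vertex cover of $G$, equivalently $n$ minus the smallest size of a maximal independent set. Fix an independent set $A$ admissible in the definition of $\gamma(G)$, so that $H:=G\setminus N_G[A]$ has no bipartite connected component, and write $a=|A|$, $b=|N_G(A)|$, so that $|V(H)|=n-a-b$. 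A short computation rewrites the corresponding term as $\frac{n+b-a}{2}=b+\frac{|V(H)|}{2}$. The key observation is that for any maximal independent set $M$ of $H$ the set $A\cup M$ is a maximal independent set of $G$: its members are pairwise non-adjacent because $V(H)\cap N_G(A)=\emptyset$, and maximality follows from maximality of $M$ in $H$ together with the fact that every vertex of $N_G(A)$ has a neighbor in $A$. Hence $n-|A\cup M|=n-a-|M|$ is the size of a minimal vertex cover of $G$, so that ${\rm deg}(J(G))\ge n-a-|M|$.

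It therefore suffices to produce, in each family, a maximal independent set $M$ of $H$ with $|M|\le |V(H)|/2$: for then ${\rm deg}(J(G))\ge n-a-|V(H)|/2=b+|V(H)|/2$, and taking the maximum over all admissible $A$ gives $\gamma(G)\le {\rm deg}(J(G))$. Since $H$ has no bipartite connected component, in particular it has no isolated vertex, and this is exactly what makes such a small $M$ available. For the bipartite case (i) the claim is immediate, since every induced subgraph of a bipartite graph is bipartite and hence the only admissible $H$ is the empty graph. For the well-covered case (ii) I would first check that $H$ itself is well-covered: the maximal independent sets of $G$ containing $A$ are precisely the sets $A\cup M$ with $M$ a maximal independent set of $H$, so well-coveredness of $G$ forces all such $M$ to have the common size $i(G)-a$. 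I would then invoke the classical fact that a well-covered graph without isolated vertices has independence number at most half of its vertices, so that every maximal independent set $M$ of $H$ satisfies $|M|\le |V(H)|/2$. For the claw-free case (iii) the graph $H$ is again claw-free because claw-freeness is hereditary; here I would take $M$ to be a minimum maximal independent set of $H$, whose size is the independent domination number of $H$. By the theorem of Allan and Laskar this equals the domination number of $H$, which by Ore's theorem is at most half the number of vertices of any graph without isolated vertices, giving $|M|\le |V(H)|/2$.

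The main obstacle is the common engine of cases (ii) and (iii): the two structural graph-theoretic bounds asserting that a well-covered graph, respectively a claw-free graph, without isolated vertices admits an independent set, respectively a minimum maximal independent set, of size at most half its vertices. The bipartite case is trivial, and the passage from these bounds to the conclusion is the routine book-keeping indicated above; so the real work lies in either citing or reproving these two facts and, for (ii), in confirming that the induced subgraph $H=G\setminus N_G[A]$ inherits well-coveredness from $G$.
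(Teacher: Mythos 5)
Your proposal is correct, and it reaches the stated bound by the same first step as the paper --- substituting $\gamma(G)\le {\rm deg}(J(G))$ into Theorem~\ref{star} --- but it diverges completely in how that inequality is obtained. The paper simply quotes \cite[Theorem 4.9]{dhnt}, which asserts $\gamma(G)={\rm deg}(J(G))$ for exactly these three families, so its proof is a one-line citation. You instead re-derive the needed direction from scratch: for an admissible independent set $A$ you rewrite $\frac{n+|N_G(A)|-|A|}{2}$ as $|N_G(A)|+\frac{|V(H)|}{2}$ with $H=G\setminus N_G[A]$, observe that $A\cup M$ is a maximal independent set of $G$ for any maximal independent set $M$ of $H$ (both verifications are correct), and then produce an $M$ with $|M|\le |V(H)|/2$ in each family; the absence of bipartite components correctly guarantees that $H$ has no isolated vertices, your argument that $H$ inherits well-coveredness from $G$ is sound, and the three external inputs you lean on --- that a well-covered graph without isolated vertices satisfies $i(G)\le |V(G)|/2$ (Favaron/Berge, the observation underlying the notion of very well covered graphs), the Allan--Laskar theorem equating the independent domination and domination numbers of claw-free graphs, and Ore's bound on the domination number --- are all genuine classical theorems that should be cited explicitly rather than reproved. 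What your route buys is a self-contained, purely graph-theoretic proof of the key inequality (essentially reconstructing the content of \cite[Theorem 4.9]{dhnt}, whose proof presumably proceeds along similar combinatorial lines via \cite[Lemma 4.10]{dhnt}); what the paper's route buys is brevity, at the cost of pushing all the work into the reference. There is no gap in your argument beyond the need to attach references for the three classical facts.
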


\begin{proof}
Let $G$ be a graph satisfying the assumption. We know from \cite[Theorem 4.9]{dhnt} that $\gamma(G)={\rm deg}(J(G))$. The assertion now follows from Theorem \ref{star}.
\end{proof}

Cook and Nagel \cite{cn} defined the concept of fully clique-whiskered graphs in the following way. For a
given graph $G$, a subset $W\subseteq V(G)$ is called a {\it clique} of $G$ if every
pair of vertices of $W$ are adjacent in $G$. Let $\pi$ be a partition of
$V(G)$, say $V(G)=W_1\cup\cdots \cup W_t$, such that $W_i$ is a clique of
$G$ for every $1\leq i \leq t$. Then we say that $\pi=\{W_1, \ldots, W_t\}$ is a clique vertex-partition of $G$. Add new vertices $y_1,\ldots,y_t$ and new
edges $xy_i$ for every $x\in W_i$ and every $1\leq i\leq t$. The resulting
graph is called a {\it fully clique-whiskered graph} of $G$, denoted by $G^{\pi}$. The regularity of symbolic powers of cover ideals of fully clique-whiskered graphs is computed by Selvaraja \cite[Corollary 4.11]{s}. Here, we provide an alternative proof for this result. The proof of the following corollary also shows that the bound provided in Theorem \ref{star} for ${\rm reg}(J(G)^{(k)})$ is sharp.

\begin{cor} \label{whisk}
Let $G$ be a graph and suppose that $\pi=\{W_1, \ldots, W_t\}$ is a clique vertex-partition of $G$. Then ${\rm reg}(J(G^{\pi})^{(k)})=k|V(G)|$.
\end{cor}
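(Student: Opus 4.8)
The plan is to sandwich ${\rm reg}(J(G^{\pi})^{(k)})$ between a lower bound and an upper bound that both collapse to $k|V(G)|$. For the lower bound, \cite[Lemma 3.1]{s2} gives ${\rm reg}(J(G^{\pi})^{(k)})\geq k\,{\rm deg}(J(G^{\pi}))$, so it suffices to show ${\rm deg}(J(G^{\pi}))=|V(G)|$. For the upper bound I would apply Theorem \ref{star} to $G^{\pi}$, which has $|V(G)|+t$ vertices, and verify the two equalities $\gamma(G^{\pi})=|V(G)|$ and $\alpha_2(G^{\pi})=t$; substituting these into $(k-1)\gamma(G^{\pi})+|V(G^{\pi})|-\alpha_2(G^{\pi})$ yields $(k-1)|V(G)|+\big(|V(G)|+t\big)-t=k|V(G)|$. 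Thus the whole proof reduces to three combinatorial computations about $G^{\pi}$.

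The structural core is the claim that $G^{\pi}$ is well-covered with $i(G^{\pi})=t$, which I would establish directly using the block decomposition $V(G^{\pi})=\bigsqcup_{i=1}^{t}\big(W_i\cup\{y_i\}\big)$. Given any maximal independent set $M$ of $G^{\pi}$, I would check that $M$ meets each block in exactly one vertex: since $W_i$ is a clique, $M$ contains at most one vertex of $W_i$; if $M$ contains some $x\in W_i$ then $y_i\notin M$ because $xy_i\in E(G^{\pi})$, while if $M\cap W_i=\emptyset$ then the only neighbours of $y_i$ lie in $W_i$, so maximality forces $y_i\in M$. Hence $|M|=t$ for every maximal independent set $M$, so $G^{\pi}$ is well-covered with $i(G^{\pi})=t$. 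As minimal vertex covers are exactly the complements of maximal independent sets, every minimal vertex cover of $G^{\pi}$ has cardinality $|V(G^{\pi})|-t=|V(G)|$, whence ${\rm deg}(J(G^{\pi}))=|V(G)|$; and since $G^{\pi}$ is well-covered, \cite[Theorem 4.9]{dhnt} gives $\gamma(G^{\pi})={\rm deg}(J(G^{\pi}))=|V(G)|$.

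It remains to compute $\alpha_2(G^{\pi})=t$, which I expect to be the main obstacle. For the lower bound, the stars ${\rm St}(y_1),\dots,{\rm St}(y_t)$ have vertex sets $N_{G^{\pi}}[y_i]=\{y_i\}\cup W_i$ lying in distinct blocks, so they are pairwise disjoint and form a star packing of size $t$. For the upper bound, consider any star packing, given by stars ${\rm St}(c_1),\dots,{\rm St}(c_m)$ with pairwise disjoint vertex sets $N_{G^{\pi}}[c_\ell]$. The key observation is that each $N_{G^{\pi}}[c_\ell]$ contains a whisker vertex: if $c_\ell=y_i$ this is clear, and if $c_\ell=x\in W_i$ then $y_i\in N_{G^{\pi}}[c_\ell]$ because $xy_i\in E(G^{\pi})$. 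Since the sets $N_{G^{\pi}}[c_\ell]$ are disjoint, no $y_i$ lies in two of them, so choosing such a whisker for each $\ell$ gives an injection of $\{c_1,\dots,c_m\}$ into $\{y_1,\dots,y_t\}$; hence $m\leq t$. This pins down $\alpha_2(G^{\pi})=t$, after which the upper bound from Theorem \ref{star} meets the lower bound and the claimed equality ${\rm reg}(J(G^{\pi})^{(k)})=k|V(G)|$ follows.
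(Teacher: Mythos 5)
Your proposal is correct and follows essentially the same route as the paper: lower bound from \cite[Lemma 3.1]{s2} via ${\rm deg}(J(G^{\pi}))=|V(G)|$, upper bound from Theorem \ref{star} (the paper uses its specialization, Corollary \ref{buc}), and the computation $\alpha_2(G^{\pi})=t$ via the whisker stars. The only difference is that you prove from scratch the well-coveredness of $G^{\pi}$ (which the paper cites from \cite[Lemma 3.2]{cn}) and the inequality $\alpha_2(G^{\pi})\leq t$ (which the paper declares obvious); your injection argument sending each star to a whisker vertex in its closed neighborhood is a clean justification of the latter.
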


\begin{proof}
Let $y_1, \ldots, y_t$ be the vertices in $V(G^{\pi})\setminus V(G)$, such that $N_{G^{\pi}}(y_i)=W_i$. We know from \cite[Lemma 3.2]{cn} that $G^{\pi}$ is an well-covered graph with ${\rm deg}(J(G^{\pi})=|V(G^{\pi}|-t=|V(G)|$. Thus, we conclude from \cite[Lemma 3.1]{s2} that ${\rm reg}(J(G^{\pi})^{(k)})\geq k|V(G)|$. On the other hand, Corollary \ref{buc} implies that$${\rm reg}(J(G^{\pi})^{(k)})\leq (k-1)|V(G)|+|V(G^{\pi})|-\alpha_2(G^{\pi}).$$Thus, it is enough to show that $|V(G)|=|V(G^{\pi})|-\alpha_2(G^{\pi})$. This equality is obvious, as the stars centered in $y_1, \ldots, y_t$ form the largest star packing in $G^{\pi}$. In other words, $\alpha_2(G^{\pi})=t=|V(G^{\pi})|-|V(G)|$.
\end{proof}


\section{Difference between Regularities of symbolic powers} \label{sec4}

The goal of this section is to study the difference between regularities of $J(G)^{(k)}$ and $J(G)^{(k-2)}$, for each integer $k\geq 2$, Theorem \ref{regsymco}. As a consequence, in Theorem \ref{doubcm}, we compute ${\rm reg}(J(G)^{(k)})$ when $G$ be a doubly Cohen-Macaulay graph. In order to obtain our results, we need to define a new quantity, denoted by $\gamma_0(G)$, as
\begin{align*}
& \gamma_0(G):=\max\bigg\{\frac{|V(G)|+|N_G(A)|-|A|}{2}\mid A \ {\rm is \ an \ independent \ set \ of} \ G \ {\rm and} \\ & \ {\rm any \ bipartite \ connected \ component \ of} \ G\setminus N_G[A] \ {\rm is \ an \ isolated \ vertex}\bigg\}.
\end{align*}

To prove Theorem \ref{regsymco}, we need some auxiliary lemmas. In the following lemma, we provide a method to determine an upper bound for the regularity of a monomial ideal.

\begin{lem} \label{sub}
Let $I$ be a monomial ideal. Moreover, suppose that $W=\{x_1, \ldots ,x_d\}$ is a (possibly empty) subset of variables of $S$. Then$${\rm reg}(I)\leq\max\Big\{{\rm reg}\big((I+(A)): \mathrm{x}_B\big)+|B|\mid A\cap B=\emptyset, A\cup B=W\Big\}.$$
\end{lem}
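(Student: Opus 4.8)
The plan is to prove Lemma \ref{sub} by induction on $d = |W|$, the number of variables in the set $W$. The base case $d = 0$ gives $W = \emptyset$, forcing $A = B = \emptyset$, so the right-hand side is simply ${\rm reg}((I + (0)) : 1) = {\rm reg}(I)$, and the inequality holds trivially with equality. For the inductive step, I would isolate a single variable, say $x_d \in W$, and split the problem according to whether $x_d$ is placed into $A$ or into $B$.

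The key technical tool is the short exact sequence associated to a variable. For a monomial ideal $I$ and a variable $x_d$, there is the exact sequence
\begin{equation*}
0 \longrightarrow S/(I : x_d)(-1) \stackrel{\cdot x_d}{\longrightarrow} S/I \longrightarrow S/(I + (x_d)) \longrightarrow 0,
\end{equation*}
which yields the standard regularity bound
\begin{equation*}
{\rm reg}(I) \leq \max\big\{{\rm reg}(I + (x_d)),\ {\rm reg}(I : x_d) + 1\big\}.
\end{equation*}
First I would apply this with $x_d$ the distinguished variable of $W$. This produces two terms: one in which $x_d$ has been added to $I$ (corresponding to putting $x_d \in A$), and one in which we have colon-divided by $x_d$ and paid a cost of $+1$ (corresponding to putting $x_d \in B$, where $|B|$ picks up that $+1$). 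To each of these two ideals I would apply the induction hypothesis with the smaller variable set $W' = \{x_1, \ldots, x_{d-1}\}$.

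The bookkeeping is the heart of the argument, and I expect it to be the main (though routine) obstacle: I must verify that the two families of terms produced by recursing on $I + (x_d)$ and on $I : x_d$, when reindexed by the partitions of $W'$, reassemble exactly into the family of terms indexed by partitions of the full set $W$. Concretely, a partition $W' = A' \cup B'$ applied to $I + (x_d)$ gives a term ${\rm reg}\big((I + (x_d) + (A')) : \mathrm{x}_{B'}\big) + |B'|$, which matches the partition $A = A' \cup \{x_d\}$, $B = B'$ of $W$ since $(A) = (A') + (x_d)$ and $|B| = |B'|$. A partition $W' = A' \cup B'$ applied to $I : x_d$ gives ${\rm reg}\big((I : x_d + (A')) : \mathrm{x}_{B'}\big) + |B'| + 1$, and here I would note that $(I : x_d + (A')) : \mathrm{x}_{B'} = (I + (A')) : \mathrm{x}_{B' \cup \{x_d\}}$ (using that adding the variable $x_d$ to $A'$ has no effect once we are colon-dividing by $x_d$; more carefully, for a monomial ideal one checks $(I + (A')) : \mathrm{x}_{B'\cup\{x_d\}} = ((I+(A')) : x_d) : \mathrm{x}_{B'}$ and that $(I + (A')) : x_d = (I : x_d) + (A')$ when $x_d \notin A'$). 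This matches the partition $A = A'$, $B = B' \cup \{x_d\}$ of $W$ with the correct count $|B| = |B'| + 1$. Since every partition of $W$ arises in exactly one of these two ways according to whether $x_d \in A$ or $x_d \in B$, the maximum over partitions of $W'$ of the two recursive bounds is dominated by the maximum over partitions of $W$, completing the induction.
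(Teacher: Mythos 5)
Your proposal is correct and is essentially the paper's own argument: both proceed by induction on $d$ using the regularity bound ${\rm reg}(J)\leq\max\{{\rm reg}(J+(x_d)),\,{\rm reg}(J:x_d)+1\}$ coming from the short exact sequence for a variable, together with the same colon/sum bookkeeping identities for monomial ideals. The only difference is the order of operations (you split off $x_d$ first and then recurse, while the paper invokes the induction hypothesis on $W'$ first and then splits each resulting term at $x_d$), which yields the identical family of terms.
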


\begin{proof}
We use induction on $d$. There is nothing to prove for $d=0$. Therefore, suppose that $d\geq 1$. Set $W' =\{x_1, \ldots, x_{d-1}\}$. We know from the induction hypothesis that
\begin{align*} \tag{1} \label{1}
{\rm reg}(I)\leq\max\Big\{{\rm reg}\big((I+(A)): \mathrm{x}_B\big)+|B|\mid A\cap B=\emptyset, A\cup B=W'\Big\}.
\end{align*}
For every pair of subsets $A, B\subseteq W$ with $A\cap B=\emptyset$ and $A\cup B=W'$, it follows from \cite[Lemma 2.10]{dhs} that
\begin{align*}
& {\rm reg}\big((I+(A)): \mathrm{x}_B\big)+|B|\\ & \leq \max\Big\{{\rm reg}\big(((I+(A)): \mathrm{x}_B), x_d\big)+|B|, {\rm reg}\big(((I+(A)): \mathrm{x}_B): x_d\big)+|B|+1\Big\}\\ & =\max\Big\{{\rm reg}\big((I+(A)+(x_d)): \mathrm{x}_B\big)+|B|, {\rm reg}\big((I+(A)): x_d\mathrm{x}_B\big)+|B|+1\Big\}\\ & =\max\Big\{{\rm reg}\big((I+(A\cup\{x_d\})): \mathrm{x}_B\big)+|B|, {\rm reg}\big((I+(A)): \mathrm{x}_{B\cup\{x_d\}}\big)+|B|+1\Big\}.
\end{align*}
The claim now follows by combining the above inequality and inequality (\ref{1}).
\end{proof}

In order to compute the regularity of $J(G)^{(k)}$ using Lemma \ref{sub}, we need to study the ideals of the form $J(G)^{(k)}+(A)$. The following lemma, provides a useful description for this kind of ideals when $A$ is an independent set of $G$.

\begin{lem} \label{delet}
Let $G$ be a graph with vertex set $V(G)=\{x_1, \ldots, x_n\}$ and suppose $A$ is an independent set of $G$. Set $u=\prod_{x_i\in N_G(A)}x_i$. Then for any integer $k\geq 1$, we have$$J(G)^{(k)}+(A)=u^kJ(G\setminus N_G[A])^{(k)}+(A).$$
\end{lem}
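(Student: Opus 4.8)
The plan is to compare the two monomial ideals generator-by-generator using the exponent criterion for membership in symbolic powers. Recall that since $J(G)^{(k)}=\bigcap_{x_ix_j\in E(G)}(x_i,x_j)^k$, a monomial $m$ lies in $J(G)^{(k)}$ if and only if $\deg_{x_i}(m)+\deg_{x_j}(m)\geq k$ for every edge $x_ix_j\in E(G)$, where $\deg_{x_i}(m)$ denotes the exponent of $x_i$ in $m$. Both ideals in the asserted identity contain $(A)$, and every monomial divisible by some variable of $A$ belongs to $(A)$; hence the two sides automatically agree on all such monomials. Since both sides are monomial ideals, it therefore suffices to prove that a monomial $m$ with $\deg_{x_a}(m)=0$ for all $x_a\in A$ satisfies $m\in J(G)^{(k)}$ if and only if $m\in u^kJ(G\setminus N_G[A])^{(k)}$.

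To organize the argument I would classify the edges of $G$ according to the partition $V(G)=A\sqcup N\sqcup R$, where $N:=N_G(A)$ and $R:=V(G)\setminus N_G[A]$. Since $A$ is independent and $R$ contains no neighbor of $A$, every edge of $G$ is of one of four types: an $A$--$N$ edge, an $N$--$N$ edge, an $N$--$R$ edge, or an $R$--$R$ edge, the last type being precisely the edges of $G'=G\setminus N_G[A]$. The key point is that $u=\prod_{x_i\in N}x_i$ involves only the variables of $N$, so for $x_r\in R$ one has $\deg_{x_r}(m)=\deg_{x_r}(m/u^k)$ whenever $u^k\mid m$.

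For the forward implication, suppose $m\in J(G)^{(k)}$ with $\deg_{x_a}(m)=0$ for all $x_a\in A$. Each $x_\nu\in N$ is adjacent to some $x_a\in A$, and applying the criterion to the edge $x_ax_\nu$ gives $\deg_{x_\nu}(m)\geq k$ (as $\deg_{x_a}(m)=0$); hence $u^k\mid m$. For any $R$--$R$ edge $x_rx_{r'}$, which is an edge of $G$, the criterion gives $\deg_{x_r}(m)+\deg_{x_{r'}}(m)\geq k$; since $u$ is supported on $N$ this equals $\deg_{x_r}(m/u^k)+\deg_{x_{r'}}(m/u^k)$, so $m/u^k\in J(G')^{(k)}$ and therefore $m\in u^kJ(G')^{(k)}$. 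Conversely, if $u^k\mid m$ and $m/u^k\in J(G')^{(k)}$, I would verify the four edge types directly: on $A$--$N$ and $N$--$R$ edges the required sum is at least $\deg_{x_\nu}(m)\geq k$; on $N$--$N$ edges it is at least $2k$; and on $R$--$R$ edges it equals the corresponding sum for $m/u^k$, which is at least $k$ by hypothesis. Thus $m\in J(G)^{(k)}$, completing the equivalence and hence the proof.

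The argument involves no serious obstacle; the only thing requiring care is the edge classification together with the observation that $u$ is supported on $N$, so that passing between $m$ and $m/u^k$ leaves the $R$-exponents unchanged while accounting for the forced $N$-exponents. I would double-check the degenerate cases $A=\emptyset$ (where $u=1$ and the identity is trivial) and $E(G')=\emptyset$ (where $J(G')^{(k)}=S$ and the identity reduces to $J(G)^{(k)}+(A)=(u^k)+(A)$) to confirm that the single statement covers them uniformly.
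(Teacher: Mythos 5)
Your argument is correct, and it takes a genuinely different route from the paper. The paper first proves the case $k=1$ by a vertex-cover correspondence (a vertex cover $C$ of $G$ with $C\cap A=\emptyset$ must contain $N_G(A)$, and $C\setminus N_G(A)$ is then a vertex cover of $G\setminus N_G[A]$, and conversely), and then deduces the case $k\geq 2$ from the case $k=1$ by manipulating identities of the form $J(G)^{(k)}+(A)=\big(J(G)+(A)\big)^{(k)}+(A)=\big(uJ(G\setminus N_G[A])+(A)\big)^{(k)}+(A)$, finishing with the observation that the variables dividing $u$ do not occur in the generators of $J(G\setminus N_G[A])$. You instead argue uniformly in $k$ at the level of individual monomials, using the membership criterion $m\in\bigcap_{x_ix_j\in E(G)}(x_i,x_j)^k$ if and only if $\deg_{x_i}(m)+\deg_{x_j}(m)\geq k$ for every edge, together with the partition $V(G)=A\sqcup N_G(A)\sqcup R$ and the resulting four edge types. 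Your reduction to monomials not divisible by any variable of $A$ is legitimate (a monomial lies in a sum of monomial ideals if and only if it lies in one of the summands), the forced inequality $\deg_{x_\nu}(m)\geq k$ for $x_\nu\in N_G(A)$ is exactly the right pivot, and the four-case verification of the converse is complete. What your approach buys is self-containedness: it avoids the paper's somewhat delicate bootstrapping through symbolic powers of ideal sums, at the cost of an explicit (but routine) case analysis; the paper's approach is shorter on the page but leans on the reader accepting the displayed chain of identities for $k\geq 2$. Your attention to the degenerate cases $A=\emptyset$ and $E(G\setminus N_G[A])=\emptyset$ is a welcome extra check that the paper does not make explicit.
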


\begin{proof}
We first prove the assertion for $k=1$. Let $C$ be a vertex cover of $G$ with $A\cap C=\emptyset$. Then $N_G(A)\subseteq C$ and $C\setminus N_G(A)$ is a vertex cover of $G\setminus N_G[A]$. This shows that$$J(G)+(A)\subseteq uJ(G\setminus N_G[A])+(A).$$For the reverse inclusion, assume that $D$ is a vertex cover of  $G\setminus N_G[A]$. Then the union $D\cup N_G(A)$ is a vertex cover of $G$. This shows that$$uJ(G\setminus N_G[A])+(A)\subseteq J(G)+(A),$$and completes the proof for $k=1$. Now, suppose that $k\geq 2$. Then
\begin{align*}
& J(G)^{(k)}+(A)=\big(J(G)+(A)\big)^{(k)}+(A)=\big(uJ(G\setminus N_G[A])+(A)\big)^{(k)}+(A)\\ & \big(uJ(G\setminus N_G[A])\big)^{(k)}+(A)=u^kJ(G\setminus N_G[A])^{(k)}+(A),
\end{align*}
where the second equality follows from the case $k=1$, and the last equality follows from the fact that the variables dividing $u$ do not appear in the minimal monomial generators of $J(G\setminus N_G[A])$.
\end{proof}

In the following lemma, we investigate the relation between ${\rm reg}(J(G\setminus N_G[A])$ and ${\rm reg}(J(G))$ when $A$ is an independent set of $G$.

\begin{lem} \label{regcol}
Let $G$ be a graph and suppose $A$ is an independent set of $G$. Then$${\rm reg}(J(G\setminus N_G[A])\leq {\rm reg}(J(G))-|N_G(A)|.$$
\end{lem}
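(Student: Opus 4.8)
The plan is to reduce the statement to the single inequality ${\rm reg}(J(G)+(A))\le {\rm reg}(J(G))$, and then to establish that. Set $H=G\setminus N_G[A]$ and $u=\prod_{x_i\in N_G(A)}x_i$, so that ${\rm deg}(u)=|N_G(A)|$. Since $u$ is a squarefree monomial in the variables $N_G(A)$, which are disjoint from the variables $V(H)$ occurring in the minimal generators of $J(H)$, multiplication by $u$ gives a graded isomorphism $uJ(H)\cong J(H)(-{\rm deg}(u))$, whence ${\rm reg}(uJ(H))={\rm reg}(J(H))+|N_G(A)|$. Moreover, no variable of $A$ divides a minimal generator of $uJ(H)$, so the variables of $A$ form a regular sequence on $S/uJ(H)$; tensoring the minimal free resolution of $S/uJ(H)$ with the Koszul complex on these variables shows that adding them leaves the regularity unchanged, i.e. ${\rm reg}(uJ(H)+(A))={\rm reg}(uJ(H))$. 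By the case $k=1$ of Lemma \ref{delet} we have $uJ(H)+(A)=J(G)+(A)$. Chaining these identities gives
$$ {\rm reg}(J(H))+|N_G(A)|={\rm reg}(J(G)+(A)), $$
so the lemma follows once we prove ${\rm reg}(J(G)+(A))\le {\rm reg}(J(G))$.

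The substantial ingredient is the single-vertex monotonicity ${\rm reg}(J(G\setminus a))\le {\rm reg}(J(G))$ for every vertex $a$. I would obtain this through Terai's theorem, already used in Proposition \ref{star1}, which gives ${\rm reg}(J(G))={\rm pd}(S/I(G))$. Writing $M=S/I(G)$ and noting the identity $I(G)+(a)=I(G\setminus a)+(a)$, we have $S/(I(G)+(a))=M/aM$, so the standard inequality ${\rm depth}(M/aM)\ge {\rm depth}(M)-1$ together with the Auslander--Buchsbaum formula yields ${\rm pd}(S/(I(G)+(a)))\le {\rm pd}(S/I(G))+1$. On the other hand, killing the variable $a$ raises projective dimension by exactly one, so ${\rm pd}(S/(I(G)+(a)))={\rm pd}(S'/I(G\setminus a))+1$, where $S'$ is the polynomial ring on $V(G)\setminus\{a\}$; by Terai's theorem this equals ${\rm reg}(J(G\setminus a))+1$. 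Comparing the two expressions gives the claim, the possible isolated vertices of $G\setminus a$ being harmless since free variables do not change graded Betti numbers.

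I would then deduce ${\rm reg}(J(G)+(A))\le {\rm reg}(J(G))$ by induction on $|A|$, the case $A=\emptyset$ being trivial. For $|A|\ge 1$ pick $a\in A$ and set $A'=A\setminus\{a\}$. From the exact sequence
$$ 0\to \big(S/((J(G)+(A')):a)\big)(-1)\xrightarrow{\ \cdot a\ } S/(J(G)+(A'))\to S/(J(G)+(A))\to 0 $$
one gets ${\rm reg}(J(G)+(A))\le\max\{{\rm reg}((J(G)+(A')):a),\,{\rm reg}(J(G)+(A'))\}$. Because colon by a monomial is computed generatorwise, $J(G):a=J(G\setminus a)$ yields $(J(G)+(A')):a=J(G\setminus a)+(A')$, and $A'$ is an independent set of $G\setminus a$ with $|A'|<|A|$; the induction hypothesis applied to $G\setminus a$, combined with the monotonicity of the preceding paragraph, gives ${\rm reg}(J(G\setminus a)+(A'))\le {\rm reg}(J(G\setminus a))\le {\rm reg}(J(G))$. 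The induction hypothesis applied to $G$ gives ${\rm reg}(J(G)+(A'))\le {\rm reg}(J(G))$, so both terms are bounded by ${\rm reg}(J(G))$, finishing the induction and, with the first paragraph, the proof.

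The main obstacle I anticipate is the single-vertex monotonicity: one must keep careful track of the two ambient polynomial rings $S$ and $S'$ and of the isolated vertices that vertex deletion may create, and verify that the depth drop and the Auslander--Buchsbaum translation interact correctly with Terai's theorem. The reduction in the first paragraph and the inductive colon bookkeeping in the last are then essentially formal.
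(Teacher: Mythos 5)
Your argument reaches the correct conclusion, but by a genuinely different and longer route than the paper, and one step needs repair. The paper's proof is a one-shot application of the colon--depth inequality ${\rm depth}(S/(I(G):\mathbf{x}_A))\ge{\rm depth}(S/I(G))$ from \cite[Corollary 1.3]{r2}, combined with the identity $I(G):\mathbf{x}_A=I(G\setminus N_G[A])+(N_G(A))$ for an independent set $A$, and then translated by Auslander--Buchsbaum and Terai's theorem; it involves no reduction to $J(G)+(A)$, no vertex-by-vertex induction, and no use of Lemma \ref{delet}. Your reduction in the first paragraph (Lemma \ref{delet} plus the disjoint-variables computation of ${\rm reg}(uJ(H)+(A))$) is correct, as is the inductive colon bookkeeping in the last paragraph.

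The gap is in your second paragraph: ``${\rm depth}(M/aM)\ge{\rm depth}(M)-1$'' is not a standard inequality for an arbitrary finitely generated module $M$ and an arbitrary $a\in\mathfrak{m}$. It holds when $a$ is a nonzerodivisor on $M$, but here $a$ is a variable that is typically a zerodivisor on $S/I(G)$, and for zerodivisors (even linear forms on quotients by homogeneous ideals) the depth of $M/aM$ is not controlled by that of $M$ in general. What you actually need is the monomial-specific fact ${\rm depth}(S/(I+(x)))\ge{\rm depth}(S/I)-1$ for a monomial ideal $I$ and a variable $x$; this is true, but its proof already rests on the colon inequality ${\rm depth}(S/(I:x))\ge{\rm depth}(S/I)$ of \cite[Corollary 1.3]{r2}, applied via the depth lemma to the sequence $0\to \big(S/(I:x)\big)(-1)\to S/I\to S/(I+(x))\to 0$ --- that is, on the very ingredient the paper uses directly. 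Alternatively, your single-vertex monotonicity ${\rm reg}(J(G\setminus a))\le{\rm reg}(J(G))$ follows more cheaply from Terai's theorem together with the fact (via Hochster's formula) that graded Betti numbers of a squarefree monomial ideal do not increase under restriction to an induced subgraph, so that ${\rm pd}(S'/I(G\setminus a))\le{\rm pd}(S/I(G))$; this avoids depth altogether. With either repair your proof is complete, though it trades the paper's short direct argument for a reduction, an induction, and a lemma that is morally the same as the paper's main tool.
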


\begin{proof}
We know from \cite[Corollary 1.3]{r2} that$${\rm depth}\big(S/(I(G):\mathrm{x}_A)\big)\geq {\rm depth}(S/I(G)).$$In other words$${\rm depth}\big(S/(I(G\setminus N_G[A])+(N_G(A))\big)\geq {\rm depth}(S/I(G)).$$Set $S'=\mathbb{K}\big[x_i\mid 1\leq i\leq n, x_i\notin N_G[A]\big]$. It follows from the above inequality that$${\rm depth}\big(S'/(I(G\setminus N_G[A])\big)+|A|\geq {\rm depth}(S/I(G)).$$Using the Auslander-Buchsbaum formula we deduce that$$n-|N_G[A]|-{\rm pd}\big(S'/(I(G\setminus N_G[A])\big)+|A|\geq n-{\rm pd}(S/I(G)).$$Equivalently,$${\rm pd}\big(S'/(I(G\setminus N_G[A])\big)+|N_G(A)|\leq {\rm pd}(S/I(G)).$$The assertion now follows from Terai's theorem \cite[Propostion 8.1.10]{hh}.
\end{proof}

Let $I$ be a monomial ideal of
$S$ with minimal monomial generators $u_1,\ldots,u_m$,
where $u_j=\prod_{i=1}^{n}x_i^{a_{i,j}}$, $1\leq j\leq m$. For every $i$
with $1\leq i\leq n$, set$$a_i:=\max\{a_{i,j}\mid 1\leq j\leq m\}$$and$$T:=\mathbb{K}[x_{1,1},x_{1,2},\ldots,x_{1,a_1},x_{2,1},
x_{2,2},\ldots,x_{2,a_2},\ldots,x_{n,1},x_{n,2},\ldots,x_{n,a_n}].$$Let $I^{{\rm pol}}$ be the squarefree
monomial ideal of $T$ with minimal generators $u_1^{{\rm pol}},\ldots,u_m^{{\rm pol}}$, where
$u_j^{{\rm pol}}=\prod_{i=1}^{n}\prod_{p=1}^{a_{i,j}}x_{i,p}$, $1\leq j\leq m$. The ideal $I^{{\rm pol}}$
is called the {\it polarization} of $I$. We know from \cite[Corollary 1.6.3]{hh} that ${\rm reg}(I^{{\rm pol}})={\rm reg}(I)$. In the next lemma, we use polarization to extend the assertion of Lemma \ref{regcol} to symbolic powers.

\begin{lem} \label{regcolsym}
Let $G$ be a graph and suppose $A$ is an independent set of $G$. Then for every integer $k\geq 1$, we have$${\rm reg}\big(J(G\setminus N_G[A])^{(k)}\big)\leq {\rm reg}\big(J(G)^{(k)}\big)-k|N_G(A)|.$$
\end{lem}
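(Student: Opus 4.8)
The plan is to reduce the statement to the case $k=1$, which is exactly Lemma \ref{regcol}, by passing to polarizations. The key point is that the polarization of $J(G)^{(k)}$ is again the cover ideal of an explicit graph, and that the passage $G \rightsquigarrow G\setminus N_G[A]$ is mirrored by deleting the closed neighborhood of a suitable independent set in that graph. Since polarization preserves regularity by \cite[Corollary 1.6.3]{hh}, this turns the desired symbolic inequality into an instance of Lemma \ref{regcol}. Note that this route uses only Lemma \ref{regcol} and polarization, and does not need Lemma \ref{delet}.

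First I would describe the relevant graph. For a non-isolated vertex $x_i$ of $G$ one checks that the maximal power of $x_i$ among the minimal monomial generators of $J(G)^{(k)}$ equals $k$ (choose a neighbor $x_j$ of $x_i$, set the exponent of $x_j$ to $0$, force the exponent $k$ on the remaining neighbors of $x_j$, and complete to a minimal solution; the exponent of $x_i$ can then not be lowered below $k$). Hence polarization introduces exactly $k$ copies $x_{i,1},\ldots,x_{i,k}$ of each vertex. Let $G_k$ be the graph with $V(G_k)=\{x_{i,p}\mid x_i\in V(G),\ 1\le p\le k\}$ and
$$E(G_k)=\big\{x_{i,p}x_{j,q}\mid \{x_i,x_j\}\in E(G) \ {\rm and}\ p+q\le k+1\big\}.$$
Writing the polarization of a minimal generator $\prod_i x_i^{c_i}$ of $J(G)^{(k)}$ as $\prod_i\prod_{p\le c_i}x_{i,p}$, one identifies its support with the complement of $\{x_{i,p}\mid p>c_i\}$. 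A direct computation with the staircase condition shows that $\{x_{i,p}\mid p>c_i\}$ is independent in $G_k$ if and only if $c_i+c_j\ge k$ for every edge $\{x_i,x_j\}$ of $G$ (the smallest value of $p+q$ with $p>c_i$, $q>c_j$ is $c_i+c_j+2$). Thus, as $(c_i)$ runs over the componentwise-minimal solutions of this system, the sets $\{x_{i,p}\mid p>c_i\}$ run exactly over the maximal independent sets of $G_k$. Therefore $(J(G)^{(k)})^{{\rm pol}}=J(G_k)$, and so ${\rm reg}(J(G)^{(k)})={\rm reg}(J(G_k))$.

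Next I would set $\widehat{A}=\{x_{i,p}\mid x_i\in A,\ 1\le p\le k\}$, the set of all copies of the vertices of $A$. Since $A$ is independent in $G$ and copies of the same vertex are never adjacent in $G_k$, the set $\widehat{A}$ is independent in $G_k$. Using the staircase condition $p+q\le k+1$, for a fixed neighbor $x_j$ of $A$ and any $q$ the choice $p=1$ realizes an edge of $G_k$, so $N_{G_k}(\widehat{A})=\{x_{j,q}\mid x_j\in N_G(A),\ 1\le q\le k\}$; in particular $|N_{G_k}(\widehat{A})|=k|N_G(A)|$ and $N_{G_k}[\widehat{A}]$ consists of all copies of the vertices in $N_G[A]$. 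Consequently $G_k\setminus N_{G_k}[\widehat{A}]$ is precisely the graph obtained by applying the above construction to $G\setminus N_G[A]$, whence $J\big(G_k\setminus N_{G_k}[\widehat{A}]\big)=\big(J(G\setminus N_G[A])^{(k)}\big)^{{\rm pol}}$, and these two ideals have the same regularity.

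Finally, applying Lemma \ref{regcol} to the graph $G_k$ and the independent set $\widehat{A}$ yields
$${\rm reg}\big(J(G_k\setminus N_{G_k}[\widehat{A}])\big)\le {\rm reg}(J(G_k))-|N_{G_k}(\widehat{A})|.$$
Translating both sides through the regularity-preserving polarizations computed above converts this into ${\rm reg}\big(J(G\setminus N_G[A])^{(k)}\big)\le {\rm reg}\big(J(G)^{(k)}\big)-k|N_G(A)|$, as desired. I expect the main obstacle to be the first step: establishing carefully that $(J(G)^{(k)})^{{\rm pol}}=J(G_k)$, i.e. that polarizing the minimal generators of the symbolic power produces exactly the minimal vertex covers of $G_k$. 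Everything after that is bookkeeping of neighborhoods under the $k$-copy construction.
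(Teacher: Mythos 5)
Your argument is correct and is essentially identical to the paper's proof: both polarize $J(G)^{(k)}$ to the cover ideal of the graph $G_k$, take the independent set consisting of all $k$ copies of the vertices of $A$, apply Lemma \ref{regcol} there, and translate back via the regularity-preserving polarization. The only difference is that you re-derive the identification $(J(G)^{(k)})^{{\rm pol}}=J(G_k)$ from scratch, whereas the paper simply cites it from \cite[Lemma 3.4]{s3}.
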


\begin{proof}
We know from \cite[Lemma 3.4]{s3} that $(J(G)^{(k)})^{{\rm pol}}$ is the cover ideal of a graph $G_k$ with vertex set $$V(G_k)=\big\{x_{i,p}\mid 1\leq i\leq n \ {\rm and} \  1\leq p\leq k\big\},$$ and edge set
\begin{align*}
E(G_k)=\big\{x_{i,p}x_{j,q}\mid x_ix_j\in E(G) \  {\rm and} \  p+q\leq k+1\big\}.
\end{align*}
It follows from \cite[Corollary 1.6.3]{hh} that ${\rm reg}(J(G)^{(k)})={\rm reg}(J(G_k))$. Set$$A'=\{x_{i,p}\mid x_i\in A, 1\leq p\leq k\}.$$Then $A'$ is an independent set of $G_k$. Therefore, Lemma \ref{regcol} implies that$${\rm reg}(J(G_k\setminus N_{G_k}[A'])\leq {\rm reg}(J(G_k))-|N_{G_k}(A')|={\rm reg}(J(G)^{(k)})-|N_{G_k}(A')|.$$Again, it follows from \cite[Lemma 3.4]{s3} and \cite[Corollary 1.6.3]{hh} that$${\rm reg}(J(G_k\setminus N_{G_k}[A'])={\rm reg}((J(G\setminus N_G[A])^{(k)}).$$Consequently,$${\rm reg}\big((J(G\setminus N_G[A])^{(k)}\big)\leq {\rm reg}(J(G)^{(k)})-|N_{G_k}(A')|.$$The assertion follows by noticing that$$N_{G_k}(A')=\{x_{j,p}\mid x_j\in N_G(A), 1\leq p\leq k\},$$and hence, $|N_{G_k}(A')|=k|N_G(A)|$.
\end{proof}

As a consequence of Lemma \ref{regcolsym}, we can compare $\gamma(G)$ and $\gamma(G\setminus N_G[A])$ when $A$ is an independent set of $G$.

\begin{cor} \label{gammacol}
Let $G$ be a graph and suppose $A$ is an independent set of $G$. Then $$\gamma(G\setminus N_G[A])\leq \gamma(G)-|N_G(A)|.$$
\end{cor}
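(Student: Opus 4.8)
The plan is to obtain the inequality directly from Lemma \ref{regcolsym} by passing to the asymptotic regularity, rather than arguing combinatorially from the definition of $\gamma$. The starting point is that $\gamma$ is precisely the normalized limit of the regularity of symbolic powers: by \cite[Theorem 4.6]{dhnt} we have
$$\gamma(G)=\lim_{k\to\infty}\frac{{\rm reg}(J(G)^{(k)})}{k},$$
and, since $G\setminus N_G[A]$ is again a graph, the same reference gives
$$\gamma(G\setminus N_G[A])=\lim_{k\to\infty}\frac{{\rm reg}(J(G\setminus N_G[A])^{(k)})}{k}.$$
Both limits exist by \cite{dhnt}, so I may work with them freely. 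I would make the point that $G\setminus N_G[A]$ is a bona fide graph explicit, so that the defining limit for $\gamma$ applies to it verbatim.

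Next I would invoke Lemma \ref{regcolsym}, which asserts that for every integer $k\geq 1$,
$${\rm reg}\big(J(G\setminus N_G[A])^{(k)}\big)\leq {\rm reg}\big(J(G)^{(k)}\big)-k|N_G(A)|.$$
Dividing through by $k$ gives
$$\frac{{\rm reg}\big(J(G\setminus N_G[A])^{(k)}\big)}{k}\leq \frac{{\rm reg}\big(J(G)^{(k)}\big)}{k}-|N_G(A)|,$$
and letting $k\to\infty$, the left-hand side converges to $\gamma(G\setminus N_G[A])$ while the right-hand side converges to $\gamma(G)-|N_G(A)|$, yielding the desired inequality.

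I do not expect a genuine obstacle here: the existence and identification of the two limits with $\gamma$ are supplied by \cite{dhnt}, and Lemma \ref{regcolsym} is already established above, so the corollary is a clean asymptotic consequence. The only fussy point worth flagging is the legitimacy of taking the limit termwise in the displayed inequality, which is immediate because both limits are known to exist and the subtracted term $|N_G(A)|$ is a constant independent of $k$.
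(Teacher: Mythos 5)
Your proposal is correct and follows exactly the paper's own argument: apply Lemma \ref{regcolsym}, divide by $k$, and pass to the limit using the identification of $\gamma$ with $\lim_{k\to\infty}{\rm reg}(J(\cdot)^{(k)})/k$ from \cite{dhnt}. No gaps; the attention to the existence of the limits is the only point that needs care, and you handle it.
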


\begin{proof}
It follows from Lemma \ref{regcolsym} and \cite[Theorems 3.6 and 4.6]{dhnt} that
\begin{align*}
& \gamma(G\setminus N_G[A])=\lim_{k\rightarrow \infty}\frac{{\rm reg}\big(J(G\setminus N_G[A])^{(k)}\big)}{k}\leq \lim_{k\rightarrow \infty}\frac{{\rm reg}\big(J(G)^{(k)}\big)-k|N_G(A)|}{k}\\ & =\lim_{k\rightarrow \infty}\frac{{\rm reg}\big(J(G)^{(k)}\big)}{k}-|N_G(A)|=\gamma(G)-|N_G(A)|.
\end{align*}
\end{proof}

We also need the following lemma in the proof of Theorem \ref{regsymco}.

\begin{lem} \label{gammaind}
Let $G$ be a graph with $n$ vertices. Then for every independent set $A$ of $G$, we have$$n+|N_G(A)|-|A|\leq 2\gamma_0(G).$$
\end{lem}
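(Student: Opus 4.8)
The plan is to show that the defining constraint of $\gamma_0(G)$ can always be arranged without ever decreasing the quantity $\frac{n+|N_G(A)|-|A|}{2}$. Concretely, starting from an arbitrary independent set $A$ of $G$, I would repeatedly enlarge $A$ so as to destroy, one component at a time, every bipartite connected component of $G\setminus N_G[A]$ that is not an isolated vertex, while keeping $|N_G(A)|-|A|$ from dropping. The final enlarged set will then be admissible in the maximization defining $\gamma_0(G)$ and will witness the inequality.

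First I would record the effect of absorbing one side of a bad component. Suppose $H:=G\setminus N_G[A]$ has a bipartite connected component $B$ with at least one edge, and write its bipartition as $V(B)=X\cup Y$ with $|X|\ge |Y|\ge 1$. Set $A':=A\cup Y$. Since $Y\subseteq V(H)$ is disjoint from $N_G[A]$ and is itself independent, $A'$ is again an independent set of $G$. Because $B$ is a connected component of the \emph{induced} subgraph $H$, the neighbors of $Y$ lying outside $N_G(A)$ all fall inside $B$, and connectivity forces every vertex of $X$ to have a neighbor in $Y$; hence $N_G(A')=N_G(A)\cup X$ with $X$ disjoint from $N_G(A)$. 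Consequently $N_G[A']=N_G[A]\cup V(B)$ and
$$|N_G(A')|-|A'|=\big(|N_G(A)|-|A|\big)+\big(|X|-|Y|\big)\ge |N_G(A)|-|A|.$$

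The second step is to observe that passing from $A$ to $A'$ merely deletes the component $B$ from $H$: since $B$ was a connected component of the induced subgraph, $G\setminus N_G[A']=(G\setminus N_G[A])\setminus V(B)$, and every other connected component of $G\setminus N_G[A]$ survives unchanged. In particular the number of bipartite connected components that are not isolated vertices strictly decreases. Iterating (formally, by induction on this number) produces an independent set $A^{\ast}\supseteq A$ for which every bipartite connected component of $G\setminus N_G[A^{\ast}]$ is an isolated vertex and for which $|N_G(A^{\ast})|-|A^{\ast}|\ge |N_G(A)|-|A|$. Such $A^{\ast}$ is admissible in the definition of $\gamma_0(G)$, so
$$2\gamma_0(G)\ge n+|N_G(A^{\ast})|-|A^{\ast}|\ge n+|N_G(A)|-|A|,$$
which is precisely the claimed bound.

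I do not anticipate a serious obstacle; the one point requiring care is the verification that $N_G(A')=N_G(A)\cup X$, i.e.\ that absorbing the smaller side $Y$ introduces exactly the larger side $X$ as new neighbors. This rests on $B$ being an induced connected component (so that no new neighbor can escape $B$) together with the elementary fact that in a connected bipartite graph on at least two vertices each vertex of $X$ is adjacent to some vertex of $Y$. Choosing $Y$ to be the smaller part is exactly what guarantees $|X|-|Y|\ge 0$, and hence that the target quantity never decreases across the reduction.
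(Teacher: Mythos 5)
Your argument is correct and is essentially the paper's own proof: the paper absorbs the smaller side of \emph{every} non-trivial bipartite component of $G\setminus N_G[A]$ simultaneously to form one admissible set $A'$, whereas you remove the components one at a time by induction, but the key observation that $N_G(A')=N_G(A)\cup X$ and hence $|N_G(A')|-|A'|=(|N_G(A)|-|A|)+(|X|-|Y|)\ge |N_G(A)|-|A|$ is identical. No gaps.
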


\begin{proof}
If every bipartite connected component of $G\setminus N_G[A]$ is an isolated vertex, then the assertion follows from the definition of $\gamma_0(G)$. So, suppose that $G\setminus N_G[A]$ has some bipartite connected components $H_1, \ldots, H_t$ which are not isolated vertices. For every integer $i$ with $1\leq i\leq t$, assume that $V(H_i)=X_i\sqcup Y_i$ is the bipartition of the vertex set of $H_i$. Without loss of generality, we may suppose that $|X_i|\leq |Y_i|$ for each $i=1, \ldots, t$. Set $A'=A\cup\bigcup_{i=1}^tX_i$. Then $A'$ is an independent set of $G$, and moreover, every bipartite connected component of $G\setminus N_G[A']$ is an isolated vertex. Therefore, we deduce from the definition of $\gamma_0(G)$ that$$n+|N_G(A')|-|A'|\leq 2\gamma_0(G).$$On the other hand, $N_G(A')=N_G(A)\cup\bigcup_{i=1}^tY_i$. Hence,
\begin{align*}
& n+|N_G(A)|-|A|=n+\big(|N_G(A')|-\sum_{i=1}^t|Y_i|\big)-\big(|A'|-\sum_{i=1}^t|X_i|\big)\\ & \leq n+|N_G(A')|-|A'|\leq 2\gamma_0(G),
\end{align*}
where the first inequality follows from $|X_i|\leq |Y_i|$ .
\end{proof}

We are now ready to prove the first main result of this section.

\begin{thm} \label{regsymco}
For any graph $G$ and for any integer $k\geq 2$, we have$${\rm reg}(J(G)^{(k)})\leq 2\gamma_0(G)+{\rm reg}(J(G)^{(k-2)}).$$
\end{thm}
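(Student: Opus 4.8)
The plan is to apply Lemma \ref{sub} with $W=V(G)$, which reduces bounding ${\rm reg}(J(G)^{(k)})$ to controlling, over every partition $V(G)=A\sqcup B$, the quantity ${\rm reg}\big((J(G)^{(k)}+(A)):\mathrm{x}_B\big)+|B|$. I would split according to whether $A$ is an independent set of $G$. If $A$ is \emph{not} independent, then $A$ contains both endpoints of some edge $x_ix_j$, so $J(G)^{(k)}\subseteq (x_i,x_j)^k\subseteq (A)$ and hence $J(G)^{(k)}+(A)=(A)$. Since $A\cap B=\emptyset$, coloning by $\mathrm{x}_B$ leaves $(A)$ unchanged, so the colon ideal is $(A)$, of regularity $1$. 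As $|A|\geq 2$, the corresponding term is at most $1+|B|=1+|V(G)|-|A|\leq |V(G)|-1$. Applying Lemma \ref{gammaind} to the empty set gives $|V(G)|\leq 2\gamma_0(G)$, so this term is below $2\gamma_0(G)\leq 2\gamma_0(G)+{\rm reg}(J(G)^{(k-2)})$, as required.

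The substantive case is when $A$ is independent. Set $G'=G\setminus N_G[A]$ and $u=\prod_{x_i\in N_G(A)}x_i$; then Lemma \ref{delet} gives $J(G)^{(k)}+(A)=u^kJ(G')^{(k)}+(A)$. Because $A$ is independent, $B=V(G)\setminus A=N_G(A)\sqcup V(G')$, so $\mathrm{x}_B=u\,\mathrm{x}_{V(G')}$. Using that colon distributes over sums of monomial ideals and over intersections, together with the elementary edgewise identity $(x_i,x_j)^k:x_ix_j=(x_i,x_j)^{k-2}$ (extra coprime variables in $\mathrm{x}_{V(G')}$ not affecting the colon), I would compute
$$\big(J(G)^{(k)}+(A)\big):\mathrm{x}_B=u^{k-1}\big(J(G')^{(k)}:\mathrm{x}_{V(G')}\big)+(A)=u^{k-1}J(G')^{(k-2)}+(A).$$
The identity $J(G')^{(k)}:\mathrm{x}_{V(G')}=J(G')^{(k-2)}$ is precisely where the passage from $k$ to $k-2$ enters.

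Since the variables of $u$ and of $A$ are disjoint from those occurring in $J(G')^{(k-2)}$, two standard facts — that multiplying a monomial ideal by a monomial $m$ raises its regularity by $\deg m$, and that adjoining an ideal generated by fresh variables leaves regularity unchanged — yield ${\rm reg}\big((J(G)^{(k)}+(A)):\mathrm{x}_B\big)={\rm reg}(J(G')^{(k-2)})+(k-1)|N_G(A)|$. Combining with $|B|=|V(G)|-|A|$ and applying Lemma \ref{regcolsym} with exponent $k-2$, namely ${\rm reg}(J(G')^{(k-2)})\leq {\rm reg}(J(G)^{(k-2)})-(k-2)|N_G(A)|$, gives
$${\rm reg}\big((J(G)^{(k)}+(A)):\mathrm{x}_B\big)+|B|\leq {\rm reg}(J(G)^{(k-2)})+|N_G(A)|+|V(G)|-|A|.$$
Lemma \ref{gammaind} bounds $|V(G)|+|N_G(A)|-|A|\leq 2\gamma_0(G)$, finishing this case and hence the theorem.

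I expect the main obstacle to be the colon computation in the independent case: verifying that coloning by $\mathrm{x}_B=u\,\mathrm{x}_{V(G')}$ peels off exactly one factor of $u$ and converts the $k$-th symbolic power of $G'$ into the $(k-2)$-nd (not something larger), together with correctly tracking the two regularity shifts. A minor point needing separate attention is the boundary case $k=2$, where $k-2=0$ lies outside the range of Lemma \ref{regcolsym}; there one interprets $J(G')^{(0)}=S$ directly, so the colon ideal is $uS+(A)$ of regularity $|N_G(A)|$, and the same estimate goes through since ${\rm reg}(J(G)^{(0)})=0$.
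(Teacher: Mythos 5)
Your proposal is correct and follows essentially the same route as the paper's proof: Lemma \ref{sub} with $W=V(G)$, the dichotomy on whether $A$ is independent, Lemma \ref{delet}, the colon identity that peels off one factor of $u$ and lowers the symbolic power by two (which the paper cites from the literature rather than deriving edgewise as you do), then \cite[Theorem 20.2]{p'}-type regularity bookkeeping followed by Lemmas \ref{regcolsym} and \ref{gammaind}. Your explicit handling of the boundary case $k=2$ is a small point the paper's proof glosses over.
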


\begin{proof}
Suppose $V(G)=\{x_1, \ldots, x_n\}$. Without loss of generality, we may assume that $G$ has no isolated vertex. According to Lemma \ref{sub}, it is enough to show that for any $A, B\subseteq V(G)$ with $A\cap B=\emptyset$ and $A\cup B=V(G)$, we have$${\rm reg}\big((J(G)^{(k)}+(A)): \mathrm{x}_B\big)+|B|\leq 2\gamma_0(G)+{\rm reg}(J(G)^{(k-2)}).$$

By \cite[Corollary 4.7]{dhnt}, we have $\gamma_0(G)\geq \gamma(G)\geq n/2$. If $A$ contains two vertices $x_i, x_j$ with $x_ix_j\in E(G)$, then $J(G)\subseteq (A)$. In particular, $\big((J(G)^{(k)}+(A)): \mathrm{x}_B\big)=(A)$ and its regularity is one. Thus,$${\rm reg}\big((J(G)^{(k)}+(A)): \mathrm{x}_B\big)+|B|\leq n-1\leq 2\gamma_0(G)+{\rm reg}(J(G)^{(k-2)}).$$So, suppose that $A$ is an independent set of $G$. Set $u:=\prod_{x_i\in N_G(A)}x_i$. Hence, ${\rm deg}(u)=|N_G(A)|$. By Lemma \ref{delet}, we have
\begin{align*}
& \big((J(G)^{(k)}+(A)): \mathrm{x}_B\big)=\big((u^{k}J(G\setminus N_G[A])^{(k)}+(A)): \mathrm{x}_B\big)\\ & =\big(u^{k}J(G\setminus N_G[A])^{(k)}: \mathrm{x}_B\big)+(A),
\end{align*}
where the second equality follows from $A\cap B=\emptyset$. By definition of $u$, we have $\mathrm{x}_B=u\mathrm{x}_{V(G\setminus N_G[A])}$. Thus, we deduce from \cite[Lemma 3.4]{s5} and the above equalities that$$\big((J(G)^{(k)}+(A)): \mathrm{x}_B\big)=u^{k-1}J(G\setminus N_G[A])^{(k-2)}+(A).$$Consequently, \cite[Theorem 20.2]{p'} implies that
\begin{align*}
& {\rm reg}\big((J(G)^{(k)}+(A)): \mathrm{x}_B\big)={\rm reg}\big(u^{k-1}J(G\setminus N_G[A])^{(k-2)}\big)\\ & =(k-1){\rm deg}(u)+{\rm reg}\big(J(G\setminus N_G[A])^{(k-2)}\big).
\end{align*}
Therefore, Lemma \ref{regcolsym} implies that
\begin{align*}
& {\rm reg}\big((J(G)^{(k)}+(A)): \mathrm{x}_B\big)\leq (k-1){\rm deg}(u)+{\rm reg}\big(J(G)^{(k-2)}\big)-(k-2)|N_G(A)|\\ & =(k-1)|N_G(A)|+{\rm reg}\big(J(G)^{(k-2)}\big)-(k-2)|N_G(A)|\\ & ={\rm reg}\big(J(G)^{(k-2)}\big)+|N_G(A)|
\end{align*}
Finally, using Lemma \ref{gammaind}, we conclude that
\begin{align*}
& {\rm reg}\big((J(G)^{(k)}+(A)): \mathrm{x}_B\big)+|B|\leq {\rm reg}\big(J(G)^{(k-2)}\big)+|N_G(A)|+n-|A|\\ & \leq {\rm reg}\big(J(G)^{(k-2)}\big)+2\gamma_0(G).
\end{align*}
This completes the proof.
\end{proof}

As an immediate consequence of Theorem \ref{regsymco}, we obtain the following corollary.

\begin{cor} \label{oddeven}
Let $G$ be a graph and suppose that $k\geq 2$ is an integer. Then
\begin{itemize}
\item [(i)] ${\rm reg}(J(G)^{(2k-1)})\leq (2k-2)\gamma_0(G)+{\rm reg}(J(G))$, and
\item [(ii)] ${\rm reg}(J(G)^{(2k)})\leq (2k-2)\gamma_0(G)+{\rm reg}(J(G)^{(2)})$.
\end{itemize}
\end{cor}

By theorem \ref{regsymco}, we have ${\rm reg}(J(G)^{(2)})\leq 2\gamma_0(G)$. Thus, Corollary \ref{oddeven} implies the following result.

\begin{cor} \label{even}
For any graph $G$ and for every even positive integer $k$, we have$${\rm reg}(J(G)^{(k)})\leq k\gamma_0(G).$$
\end{cor}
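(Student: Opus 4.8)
The plan is to derive Corollary \ref{even} purely from the two results immediately preceding it, namely Theorem \ref{regsymco} and Corollary \ref{oddeven}, by splitting on the parity structure of the even integer $k$. Write $k=2m$ for some positive integer $m$. If $m=1$, then $k=2$ and the claim is exactly the statement ${\rm reg}(J(G)^{(2)})\leq 2\gamma_0(G)=k\gamma_0(G)$, which follows by applying Theorem \ref{regsymco} with this very $k=2$: the theorem gives ${\rm reg}(J(G)^{(2)})\leq 2\gamma_0(G)+{\rm reg}(J(G)^{(0)})$, and since $J(G)^{(0)}=S$ has ${\rm reg}(S)=0$, the base case is immediate. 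This base case is precisely the observation recorded in the sentence just before the corollary.

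For the main case $m\geq 2$ (that is, $k\geq 4$), I would invoke part (ii) of Corollary \ref{oddeven}, which reads ${\rm reg}(J(G)^{(2m)})\leq (2m-2)\gamma_0(G)+{\rm reg}(J(G)^{(2)})$. Substituting the base-case bound ${\rm reg}(J(G)^{(2)})\leq 2\gamma_0(G)$ into the right-hand side yields
\begin{align*}
{\rm reg}(J(G)^{(2m)})\leq (2m-2)\gamma_0(G)+2\gamma_0(G)=2m\,\gamma_0(G)=k\gamma_0(G),
\end{align*}
which is exactly the desired inequality. Thus the entire argument is a one-line chaining of the even-index recursion from Corollary \ref{oddeven}(ii) with the $k=2$ base case coming out of Theorem \ref{regsymco}.

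Honestly, there is essentially no obstacle here: the corollary is a formal consequence of the machinery already established, and the only thing to be careful about is the bookkeeping of indices and the degenerate convention $J(G)^{(0)}=S$ (so that ${\rm reg}(J(G)^{(0)})=0$), which is what makes the $k=2$ instance of Theorem \ref{regsymco} collapse to $2\gamma_0(G)$. One could even unify both cases by noting that for any even $k\geq 2$, applying Theorem \ref{regsymco} repeatedly $k/2$ times reduces the exponent down to $0$ in steps of $2$, each step contributing at most $2\gamma_0(G)$ to the bound; the telescoping sum then gives $\frac{k}{2}\cdot 2\gamma_0(G)=k\gamma_0(G)$. I would present the short version via Corollary \ref{oddeven}(ii), since that corollary has already done the induction, and simply state that the conclusion follows by combining it with the inequality ${\rm reg}(J(G)^{(2)})\leq 2\gamma_0(G)$ noted above.
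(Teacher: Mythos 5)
Your argument is correct and is essentially identical to the paper's: the author likewise obtains the base case ${\rm reg}(J(G)^{(2)})\leq 2\gamma_0(G)$ from Theorem \ref{regsymco} (implicitly using $J(G)^{(0)}=S$) and then chains it with Corollary \ref{oddeven}(ii). Your explicit handling of the $k=2$ case and the convention ${\rm reg}(S)=0$ only makes transparent what the paper leaves implicit.
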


\begin{rem}
It is natural to ask whether the inequality given by Corollary \ref{even} is true if $k$ is an odd positive integer. Unfortunately, this is not the case even if $k=1$. For instance, consider the $4$-cycle graph $C_4$. One can easy check that ${\rm reg}(J(C_4))=3$. But $\gamma_0(C_4)=5/2$. Note that $\gamma(C_4)=2$ and ${\rm reg}(J(C_4)^{(2)})=5$. Therefore, this example also shows that one can not replace $\gamma_0(G)$ by $\gamma(G)$ in Corollary \ref{even}.
\end{rem}

Our next goal in this section is to compute ${\rm reg}(J(G)^{(k)})$ when $G$ is a doubly Cohen-Macaulay graph. We first need the following proposition which states that for any $1$-well-covered graph, the quantities $\gamma(G)$ and $\gamma_0(G)$ coincide. Using this equality, we are able to compute ${\rm reg}(J(G)^{(k)})$ when $k$ is an even positive integer.

\begin{prop} \label{onewell}
Let $G$ be a $1$-well-covered graph. Then $\gamma_0(G)=\gamma(G)={\rm deg}(J(G))$. In particular, for any even integer $k\geq 2$, we have $${\rm reg}(J(G)^{(k)})=k{\rm deg}(J(G)).$$
\end{prop}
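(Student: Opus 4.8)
The plan is to establish the statement in two parts, first proving the combinatorial equality $\gamma_0(G)=\gamma(G)={\rm deg}(J(G))$ and then deducing the regularity formula for even $k$. For the equality, recall from \cite[Theorem 4.9]{dhnt} that $\gamma(G)={\rm deg}(J(G))$ holds for well-covered graphs, so since a $1$-well-covered graph is in particular well-covered, it suffices to prove $\gamma_0(G)\leq\gamma(G)$ (the reverse inequality $\gamma(G)\leq\gamma_0(G)$ being automatic from the definitions, as noted in the introduction). So the heart of the matter is the inequality $\gamma_0(G)\leq{\rm deg}(J(G))$, i.e.\ to show that for every independent set $A$ of $G$ with the property that each bipartite connected component of $G\setminus N_G[A]$ is an isolated vertex, one has
\[
\frac{|V(G)|+|N_G(A)|-|A|}{2}\leq {\rm deg}(J(G)).
\]

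First I would take such a witnessing independent set $A$ realizing $\gamma_0(G)$, and reformulate the right-hand side: since $G$ is well-covered, ${\rm deg}(J(G))=|V(G)|-i(G)$ where $i(G)$ is the independence number. Thus the desired inequality is equivalent to $|N_G(A)|-|A|\leq |V(G)|-2i(G)$, i.e.\ to exhibiting a large enough independent set in $G$. The strategy is to build an independent set of size at least $\tfrac{1}{2}(|V(G)|-|N_G(A)|+|A|)$ by combining $A$ with independent sets chosen inside $G\setminus N_G[A]$. Since every bipartite component of $G\setminus N_G[A]$ is an isolated vertex, the isolated vertices together with $A$ already form an independent set, and on each non-bipartite (hence non-empty, containing an odd structure) component I would use the $1$-well-covered hypothesis to control the independence number from below. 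The key mechanism is that $1$-well-coveredness forces $i(G\setminus x_i)=i(G)$ for each vertex $x_i$, which propagates to $i(G\setminus N_G[A])=i(G)-|A|$ type identities after peeling off the closed neighborhood of $A$ one vertex at a time; this is precisely where the hypothesis is used, and it is the step I expect to be the main obstacle, since one must verify that deleting $N_G[A]$ (rather than a single vertex) preserves the independence-number bookkeeping.

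Once the equality $\gamma_0(G)=\gamma(G)={\rm deg}(J(G))$ is in hand, the regularity formula for even $k$ follows quickly. On one side, \cite[Lemma 3.1]{s2} gives the lower bound ${\rm reg}(J(G)^{(k)})\geq k\,{\rm deg}(J(G))$ for every $k$. On the other side, Corollary \ref{even} gives ${\rm reg}(J(G)^{(k)})\leq k\gamma_0(G)$ for every even positive integer $k$, and substituting $\gamma_0(G)={\rm deg}(J(G))$ turns this into ${\rm reg}(J(G)^{(k)})\leq k\,{\rm deg}(J(G))$. Combining the two bounds yields the equality ${\rm reg}(J(G)^{(k)})=k\,{\rm deg}(J(G))$ for even $k\geq 2$, as claimed.

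In summary, the only genuinely new work is the combinatorial inequality $\gamma_0(G)\leq{\rm deg}(J(G))$ for $1$-well-covered graphs; everything downstream is an assembly of Corollary \ref{even}, \cite[Lemma 3.1]{s2}, and \cite[Theorem 4.9]{dhnt}. I would therefore devote most of the argument to the careful choice of the auxiliary independent set and to justifying, via the defining property of $1$-well-covered graphs, that enlarging $A$ by independent vertices of the components of $G\setminus N_G[A]$ cannot produce an independent set exceeding $i(G)$ in size, which is exactly the content needed to close the inequality.
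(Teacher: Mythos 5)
Your reduction of the problem to the inequality $\gamma_0(G)\leq \gamma(G)$, and your deduction of the even-$k$ regularity formula from Corollary \ref{even}, \cite[Lemma 3.1]{s2} and \cite[Theorem 4.9]{dhnt}, both agree with the paper. The gap is in the combinatorial core. After rewriting ${\rm deg}(J(G))=|V(G)|-i(G)$, the inequality you must prove is $i(G)\leq \frac{1}{2}\big(|V(G)|-|N_G(A)|+|A|\big)$, equivalently (using well-coveredness, which gives $i(G)=i(G\setminus N_G[A])+|A|$) the bound $i(G\setminus N_G[A])\leq \frac{1}{2}|V(G\setminus N_G[A])|$. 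This is an \emph{upper} bound on an independence number, whereas your stated strategy --- ``build an independent set of size at least $\frac{1}{2}(|V(G)|-|N_G(A)|+|A|)$'' and ``control the independence number from below'' on the non-bipartite components --- produces a \emph{lower} bound and therefore aims at the wrong inequality. Moreover, the needed upper bound is false for a general non-bipartite graph without isolated vertices (a triangle with two pendant vertices attached to one corner has five vertices and independence number three), so no componentwise argument that ignores how $G\setminus N_G[A]$ sits inside the $1$-well-covered graph $G$ can succeed; and your closing criterion, that the enlarged set ``cannot produce an independent set exceeding $i(G)$ in size,'' is vacuous, since no independent set of $G$ ever exceeds $i(G)$.

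The missing idea is that for a $1$-well-covered graph $G$ and any independent set $A$, the graph $G\setminus N_G[A]$ has \emph{no isolated vertex at all}: if $v$ were isolated there, then $i(G\setminus v)=i\big((G\setminus v)\setminus N_{G\setminus v}[A]\big)+|A|=i(G\setminus N_G[A])-1+|A|=i(G)-1$, contradicting the defining property $i(G\setminus v)=i(G)$. Once this is known, the condition ``every bipartite connected component of $G\setminus N_G[A]$ is an isolated vertex'' collapses to ``$G\setminus N_G[A]$ has no bipartite connected component,'' so the maxima defining $\gamma_0(G)$ and $\gamma(G)$ range over the same family of sets $A$ and coincide; the equality $\gamma(G)={\rm deg}(J(G))$ then comes entirely from the citation to \cite[Theorem 4.9]{dhnt}, with no need to re-derive the bound $i(G\setminus N_G[A])\leq\frac{1}{2}|V(G\setminus N_G[A])|$ yourself. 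This short contradiction argument is exactly where the $1$-well-covered hypothesis enters, and it is absent from your proposal.
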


\begin{proof}
The equality $\gamma(G)={\rm deg}(J(G))$ is known by \cite[Theorem 4.9]{dhnt}. So, we prove that $\gamma_0(G)=\gamma(G)$. Let $A$ be an independent set of $G$. It is enough to prove that $G\setminus N_G[A]$ has no isolated vertex. By contradiction, assume that $v$ is an isolated vertex of $G\setminus N_G[A]$. Therefore, $i(G\setminus N_G[A]\setminus v)=i(G\setminus N_G[A])-1$. Consequently,
\begin{align*}
& i(G\setminus v)=i(G\setminus v\setminus N_G[A])+|A|=i(G\setminus N_G[A]\setminus v)+|A|\\ & =i(G\setminus N_G[A])+|A|-1=i(G)-1,
\end{align*}
which is a contradiction.

To prove the last part of the proposition, note that by \cite[Lemma 3.1]{s2}, we have ${\rm reg}(J(G)^{(k)})\geq k{\rm deg}(J(G))$. The reverse inequality follows from Corollary \ref{even} and the first part.
\end{proof}

We are now ready to compute the regularity of symbolic powers of cover ideals of doubly Cohen-Macaulay graphs.

\begin{thm} \label{doubcm}
Let $G$ be a doubly Cohen-Macaulay graph. Then for every integer $k\geq 1$, we have$${\rm reg}(J(G)^{(k)})=k{\rm deg}(J(G)).$$
\end{thm}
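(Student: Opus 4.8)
The plan is to reduce everything to the $1$-well-covered case already handled in Proposition \ref{onewell}, by first establishing that every doubly Cohen-Macaulay graph is $1$-well-covered. Recall that any Cohen-Macaulay graph is well-covered, since the independence complex of a Cohen-Macaulay graph is pure, which is precisely the statement that all maximal independent sets (equivalently, all minimal vertex covers) have the same cardinality. Applying this both to $G$ and to each deletion $G\setminus x_i$, and using the hypothesis $i(G\setminus x_i)=i(G)$ built into the definition of doubly Cohen-Macaulay, I conclude that $G$ is well-covered and that every $G\setminus x_i$ is well-covered with $i(G\setminus x_i)=i(G)$; that is, $G$ is $1$-well-covered. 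Consequently Proposition \ref{onewell} gives $\gamma_0(G)=\gamma(G)={\rm deg}(J(G))$ and, for every even $k\geq 2$, the equality ${\rm reg}(J(G)^{(k)})=k{\rm deg}(J(G))$.

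It then remains to treat odd exponents. For the base case $k=1$, I would use that $G$, being Cohen-Macaulay, is sequentially Cohen-Macaulay, so by \cite[Theorem 8.2.20]{hh} the cover ideal $J(G)$ is componentwise linear; hence \cite[Corollary 8.2.14]{hh} yields ${\rm reg}(J(G))={\rm deg}(J(G))$. For odd $k\geq 3$, write $k=2m-1$ with $m\geq 2$ and invoke Corollary \ref{oddeven}(i), which gives
$${\rm reg}(J(G)^{(k)})\leq (2m-2)\gamma_0(G)+{\rm reg}(J(G)).$$
Substituting $\gamma_0(G)={\rm deg}(J(G))$ from the first step and ${\rm reg}(J(G))={\rm deg}(J(G))$ from the $k=1$ case collapses the right-hand side to $(2m-1){\rm deg}(J(G))=k{\rm deg}(J(G))$.

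Finally, the reverse inequality ${\rm reg}(J(G)^{(k)})\geq k{\rm deg}(J(G))$ holds for every graph and every $k\geq 1$ by \cite[Lemma 3.1]{s2}, so combining the two bounds yields the desired equality for all $k\geq 1$. The only genuinely new ingredient is the passage from doubly Cohen-Macaulay to $1$-well-covered; once that structural fact is in place, the theorem is a bookkeeping assembly of the lower bound of \cite[Lemma 3.1]{s2}, the even-exponent equality of Proposition \ref{onewell}, and the odd-exponent estimate of Corollary \ref{oddeven}. I expect the main (though modest) obstacle to be cleanly justifying the Cohen-Macaulay $\Rightarrow$ well-covered step for the deletions and checking that the independence-number hypothesis transfers exactly as the definition of $1$-well-covered requires, rather than any delicate homological estimate.
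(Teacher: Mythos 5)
Your proof is correct and follows essentially the same route as the paper's: the lower bound from \cite[Lemma 3.1]{s2}, the observation that doubly Cohen--Macaulay implies $1$-well-covered so that Proposition \ref{onewell} gives $\gamma_0(G)={\rm deg}(J(G))$, and Corollaries \ref{oddeven} and \ref{even} for the upper bound. The only (immaterial) difference is that the paper obtains ${\rm reg}(J(G))={\rm deg}(J(G))$ from the Eagon--Reiner theorem, whereas you go through sequential Cohen--Macaulayness and componentwise linearity; both are standard and valid.
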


\begin{proof}
We know from \cite[Lemma 3.1]{s2} that ${\rm reg}(J(G)^{(k)})\geq k{\rm deg}(J(G))$. On the other hand, it follows from the definition that any doubly Cohen-Macaulay graph is $1$-well-covered graph. Thus, Proposition \ref{onewell} implies that $\gamma_0(G)={\rm deg}(J(G))$. On the other hand, Eagon-Reiner theorem \cite[Theorem 8.19]{hh} implies that $J(G)$ has a linear resolution an so, ${\rm reg}(J(G))={\rm deg}(J(G))$. Therefore, by Corollaries \ref{oddeven} and \ref{even}, we have ${\rm reg}(J(G)^{(k)})\leq k{\rm deg}(J(G))$. Hence, the assertion follows.
\end{proof}

The argument in the proof of Theorem \ref{doubcm}, essentially shows that the equality ${\rm reg}(J(G)^{(k)})=k{\rm deg}(J(G))$ is true if $G$ is any Cohen-Macaulay $1$-well-covered graph. As a special case of Theorem \ref{doubcm}, we compute the regularity of symbolic powers of cover ideals of Gorenstein graphs.

\begin{cor}
Let $G$ be a Gorenstein graph. Then for every integer $k\geq 1$, we have$${\rm reg}(J(G)^{(k)})=k{\rm deg}(J(G)).$$
\end{cor}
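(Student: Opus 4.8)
The plan is to derive this corollary directly from Theorem \ref{doubcm} by showing that every Gorenstein graph is doubly Cohen-Macaulay. Once that containment is established, the conclusion ${\rm reg}(J(G)^{(k)})=k{\rm deg}(J(G))$ is immediate from the theorem, so the entire content of the proof reduces to a single implication at the level of the combinatorial/algebraic hierarchy of graph classes.

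First I would recall the definitions in play. A graph $G$ is Gorenstein if $S/I(G)$ is a Gorenstein ring, and it is doubly Cohen-Macaulay if it is Cohen-Macaulay and, for every vertex $x_i\in V(G)$, the graph $G\setminus x_i$ is Cohen-Macaulay with $i(G\setminus x_i)=i(G)$. A Gorenstein ring is in particular Cohen-Macaulay, so $G$ itself is Cohen-Macaulay; the work lies entirely in controlling the deletions $G\setminus x_i$. The standard route here is to invoke the structural characterization of Gorenstein graphs. By a theorem of Hibi--Higashitani--Kimura--O'Keefe (or equivalently the classical description via the complementary/independence complex being an Euler complex that is Cohen-Macaulay), a Cohen-Macaulay graph is Gorenstein precisely when its independence complex is a Gorenstein complex, and Gorenstein simplicial complexes that are not a cone satisfy the Dehn--Sommerville type symmetry forcing doubly Cohen-Macaulayness. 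Concretely, I would cite the known fact (due to Baclawski, and refined in the graph setting) that a Gorenstein complex with no cone point is doubly Cohen-Macaulay, and translate this through the Eagon--Reiner correspondence to the graph $G$ and its independence complex.

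The cleanest way to finish is therefore to appeal to the established result that every Gorenstein graph is doubly Cohen-Macaulay. With that in hand, Theorem \ref{doubcm} applies verbatim to $G$, yielding ${\rm reg}(J(G)^{(k)})=k{\rm deg}(J(G))$ for all $k\geq 1$, and the proof is complete. The short form of the write-up would read: since $G$ is Gorenstein, $G$ is doubly Cohen-Macaulay; the assertion then follows from Theorem \ref{doubcm}.

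The main obstacle, such as it is, is locating and correctly invoking the implication \emph{Gorenstein} $\Rightarrow$ \emph{doubly Cohen-Macaulay} for graphs, since this is exactly where the nontrivial homological-combinatorial input enters. One must be slightly careful about conventions: Gorensteinness of $S/I(G)$ is detected on the independence complex of $G$, and the doubly Cohen-Macaulay condition includes the delicate requirement $i(G\setminus x_i)=i(G)$, which rules out cone points and matches the non-cone hypothesis needed for Baclawski's theorem to apply. Provided one has the correct reference for that single implication (for instance the characterization of Gorenstein graphs together with the fact that non-cone Gorenstein complexes are doubly Cohen-Macaulay), no computation is required and the corollary drops out of Theorem \ref{doubcm} immediately.
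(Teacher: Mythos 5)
Your proposal is correct and follows essentially the same route as the paper: reduce to the case of no isolated vertices (equivalently, no cone points of the independence complex), invoke the known implication that such a Gorenstein graph is doubly Cohen--Macaulay (the paper cites Hoang--Trung, Lemma 1.3, where you cite the Baclawski-type statement for Gorenstein complexes), and then apply Theorem \ref{doubcm}. No substantive difference.
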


\begin{proof}
Clearly, we may assume that $G$ has no isolated vertex. Then it follows from \cite[Lemma 1.3]{ht2} that $G$ is a doubly Cohen-Macaulay graph. Thus, Theorem \ref{doubcm} implies that ${\rm reg}(J(G)^{(k)})=k{\rm deg}(J(G))$.
\end{proof}


\section{Cameron-Walker graphs} \label{sec5}

In this section, we compute the regularity of symbolic powers of cover ideals of Cameron-Walker graphs, Theorem \ref{cwreg}. We know from \cite[Corollary 4.7]{s} that for any Cameron-Walker graph $G$, the ideal $J(G)^{(k)}$ has linear quotients. As a consequence, to compute ${\rm reg}(J(G)^{(k)})$, it is sufficient to compute ${\rm deg}(J(G)^{(k)})$. To determine the degree of $J(G)^{(k)}$, we need to compute $\gamma(G)$. This will be done in Proposition \ref{cwgam}. We will see in that proposition that for any Cameron-Walker graph $G$, the equality $\gamma(G)={\rm deg}(G)$ holds. We need a couple of lemmas to prove Proposition \ref{cwgam}.

\begin{lem} \label{cwdel}
Let $G$ be a Cameron-Walker graph. Then for any vertex $v\in V(G)$, the graph $G\setminus N_G[v]$ is Cameron-Walker too.
\end{lem}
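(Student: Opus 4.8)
The plan is to argue structurally, using the classification of connected Cameron-Walker graphs recalled in Section \ref{sec2}. First I would reduce to the connected case: since a graph is Cameron-Walker if and only if each of its connected components is, and since deleting $N_G[v]$ only alters the component $G_0$ containing $v$ (all other components of $G$ survive untouched, hence remain Cameron-Walker), it suffices to prove that $G_0\setminus N_{G_0}[v]$ is Cameron-Walker. So I may assume $G$ is connected, and then $G$ is one of the three types: a star graph, a star triangle, or a connected bipartite graph $H$ with partition $V(H)=X\cup Y$ carrying pendant edges on $X$ and (optionally) pendant triangles on $Y$.

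The first two types are immediate. If $G$ is a star with center $c$, then deleting $N_G[c]$ leaves the empty graph, while deleting the closed neighborhood of a leaf leaves only isolated vertices; both are Cameron-Walker. If $G$ is a star triangle with center $c$, then $N_G[c]=V(G)$ gives the empty graph, whereas if $v$ lies in a triangle $\{c,v,w\}$ then $N_G[v]=\{v,c,w\}$, and deleting these vertices disconnects every other triangle $\{c,a,b\}$ into the single edge $ab$, so the result is a disjoint union of edges, again Cameron-Walker. The real work is the third type, which I would split into four cases according to the location of $v$: (a) $v$ is a pendant leaf attached to some $x_0\in X$; (b) $v\in X$; (c) $v\in Y$; (d) $v$ is a degree-two vertex of a pendant triangle at some $y_0\in Y$.

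In each of these subcases the strategy is the same: describe $N_G[v]$ explicitly, remove it, and check that every resulting connected component is again a star, a star triangle, a type-three graph, a single edge, or an isolated vertex. The guiding observations are that (i) a surviving bipartite component retains the required pendant structure, because the pendant leaves of an $X$-vertex and the pendant triangles of a $Y$-vertex are removed only when their attachment vertex is itself removed, which never happens to the $X$- or $Y$-vertices that survive; (ii) a $Y$-vertex that loses all of its $X$-neighbors becomes, together with its pendant triangles, a star triangle (or an isolated vertex if it had none); (iii) an $X$-vertex that loses all of its $Y$-neighbors becomes, together with its pendant leaves, a star; and (iv) deleting a $Y$-vertex turns each of its pendant triangles into an isolated edge, and deleting an $X$-vertex orphans its leaves as isolated vertices. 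Assembling these observations, in every subcase $G\setminus N_G[v]$ is a disjoint union of Cameron-Walker graphs, so the component criterion finishes the proof.

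The main obstacle I anticipate is purely the bookkeeping inside case type three: one must verify carefully that, after removing $N_G[v]$, no component is left with a ``half-pendant'' configuration that fails the classification, for instance a $Y$-vertex with a partially deleted triangle or an $X$-vertex stripped of its mandatory pendant edge. Pinning down exactly which leaves, triangle-vertices, and bipartite edges disappear in each of the four cases, and confirming that the orphaned fragments are precisely isolated vertices and isolated edges, is where the argument must be written out with care; once that is done, the identification of each surviving component with one of the three admissible types is routine.
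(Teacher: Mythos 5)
Your proof is correct, but it takes a genuinely different route from the paper's. You argue structurally: after reducing to a connected graph, you run through the Cameron--Walker classification (star, star triangle, bipartite graph with pendant edges on $X$ and pendant triangles on $Y$) and verify case by case that every connected component of $G\setminus N_G[v]$ is again of one of the admissible types; your observations (i)--(iv) are precisely the facts needed to rule out the ``half-pendant'' degeneracies you worry about, and the remaining bookkeeping, while tedious, does go through. The paper instead argues numerically: using the classification only to fix notation in the third case, it exhibits an explicit maximum induced matching $M$ of size $p+t$ (one pendant edge per vertex of $X$ plus one edge per pendant triangle), identifies an explicit set $M'$ of pairwise disjoint edges meeting $N_G[v]$, and shows that the induced matching number drops by at most $|M'|$ (since $M\setminus M'$ survives) while the matching number drops by at least $|M'|$, i.e.\ $\mathrm{match}(G\setminus N_G[v])\leq \mathrm{match}(G)-|M'|$; since the induced matching number never exceeds the matching number, the two quantities are squeezed together and remain equal. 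The paper's argument is shorter and sidesteps any analysis of the component structure of $G\setminus N_G[v]$, at the cost of being less transparent about what that graph looks like; your argument yields the stronger structural conclusion that $G\setminus N_G[v]$ is a disjoint union of graphs of the three admissible types, but it requires the careful case-by-case verification you flag and uses the Cameron--Walker classification theorem in an essential way rather than merely as a notational device.
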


\begin{proof}
Without loss of generality, we may assume that $G$ is a connected graph. The assertion can be easily checked if $G$ is a star graph or a star triangle graph. So, suppose that $G$ consists of a connected bipartite graph $H$ with vertex partition $V(H)=X\cup Y$ such that there is at least one pendant edge attached to each vertex of $X$ and there may be some pendant triangles attached to each vertex of $Y$. As $G$ does not have any isolated vertex, we have$${\rm match}(G\setminus N_G[v])\leq {\rm match}(G)-1.$$Consequently, if$$\ind-match(G\setminus N_G[v])\geq \ind-match(G)-1,$$then the assertion follows. In particular, if $v$ is a leaf of $G$, then we are done. So assume that $v$ is not a leaf of $G$. Let $t$ denote the number of triangles of $G$. Suppose $X=\{x_1, \ldots, x_p\}$ and $Y=\{y_1, \ldots, y_q\}$. Also, assume that for each integer $i=1, 2, \ldots, p$, the edges $x_iw_{i1}, \ldots, x_iw_{im_i}$ are the pendant edges attached to $x_i$, and for each integer $j=1, 2, \ldots, q$, the triangles with vertices $\{y_j, z_{j1}, z'_{j1}\}, \ldots, \{y_j, z_{jr_j}, z'_{jr_j}\}$ are the pendant triangles attached to $y_j$. In particular, $r_1+r_2+\cdots +r_q=t$. We know from \cite[Lemma 3.3]{s8} that $\ind-match(G)=p+t$. It is obvious that the set$$M=\{x_1w_{11}, \ldots, x_pw_{p1}\}\cup\{z_{jk}z'_{jk}\mid 1\leq j\leq q, 1\leq k\leq r_j\}$$is an induced matching in $G$ of size $p+t$.

$\bullet$ Suppose $v=z_{jk}$ or $v=z'_{jk}$, for some integers $j$ and $k$ with $1\leq j\leq q$ and $1\leq k\leq r_j$. Then by deleting $N_G[v]$ from $G$, only the edge $z_{jk}z'_{jk}$ will be deleted from $M$. So,$$\ind-match(G\setminus N_G[v])\geq m+t-1=\ind-match(G)-1$$and the assertion follows in this case.

$\bullet$ Suppose $v=x_i$, for some integer $i$ with $1\leq i\leq p$. Then by deleting $N_G[v]$ from $G$, only the edge $x_iw_{i1}$ will be deleted from $M$. So,$$\ind-match(G\setminus N_G[v])\geq m+t-1=\ind-match(G)-1$$and the assertion follows in this case.

$\bullet$ Suppose $v=y_i$, for some integer $j$ with $1\leq j\leq q$. Assume that$$X\cap N_G[v]=\{x_{\ell_1}, \ldots, x_{\ell_h}\},$$for some integer $h$ with $1\leq h\leq p$. Note that by deleting $N_G[v]$ from $G$, the vertices $v, x_{\ell_1}, \ldots, x_{\ell_h}, z_{j1}, \ldots, z_{jr_j}, z'_{j1}, \ldots, z'_{jr_j}$ will be deleted. Moreover, the vertices $w_{\ell_11}, \ldots, w_{\ell_1m_{\ell_1}}, \ldots, w_{\ell_h1}, \ldots, w_{\ell_hm_{\ell_h}}$ are isolated vertices in $G\setminus N_G[v]$. Observe the set of edges$$M':=\{x_{\ell_1}w_{\ell_11}, \ldots, x_{\ell_h}w_{\ell_h1}, z_{j1}z'_{j1}, \ldots, z_{jr_j}z'_{jr_j}\}\subseteq E(G).$$It implies that$${\rm match}(G\setminus N_G[v])\leq {\rm match}(G)-|M'|.$$On the other hand $M\setminus M'$ is an induced matching of $G\setminus N_G[v]$ which shows that $$\ind-match(G\setminus N_G[v])\geq \ind-match(G)-|M'|.$$Therefore, $G\setminus N_G[v]$ is a Cameron-Walker graph.
\end{proof}

The following lemma determines a lower bound for ${\rm deg}(J(G))$ when $G$ is a Cameron-Walker graph.

\begin{lem} \label{cwcov}
Let $G$ be a Cameron-Walker graph without isolated vertices. Then $G$ has a minimal vertex cover with cardinality at least $\frac{|V(G)|}{2}$.
\end{lem}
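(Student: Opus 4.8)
The plan is to use the structural characterization of connected Cameron-Walker graphs recalled in Section \ref{sec2} and construct an explicit minimal vertex cover of the required size. Since a minimal vertex cover of $G$ is obtained by taking the union of minimal vertex covers of each connected component, and $|V(G)|/2$ is additive over components, it suffices to treat the case where $G$ is connected. So I would first reduce to the connected case and then split into the three types: star graphs, star triangles, and the generic bipartite-with-pendants type.

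For a star graph $K_{1,s}$ with $s\geq 1$ leaves, the set of leaves is a minimal vertex cover of size $s$, and since $|V(G)|=s+1$ we have $s\geq (s+1)/2$ exactly when $s\geq 1$, which always holds; so this case is immediate. For a star triangle consisting of $m$ triangles sharing a common vertex $w$, one checks that $|V(G)|=2m+1$, and picking one non-central vertex from each triangle together with whatever is forced gives a minimal cover; the cleanest choice is to cover by taking, from each triangle, the two non-central vertices, which yields a minimal vertex cover of size $2m\geq (2m+1)/2$. I would verify minimality directly from the triangle structure. These two special cases are routine.

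The substantive case is the third: $G$ consists of a connected bipartite graph $H$ with bipartition $V(H)=X\cup Y$, where each vertex of $X$ carries at least one pendant edge and each vertex of $Y$ may carry some pendant triangles. Here the natural candidate is to build a cover $C$ by including all of $X$, all the pendant-triangle vertices adjacent to each $y_j$ (that is, the vertices $z_{jk},z'_{jk}$), and enough of $Y$ to cover the edges of $H$ not already covered by $X$. Since every edge of $H$ meets $X$, taking $X$ into the cover handles all bipartite edges; the leaves $w_{ik}$ hanging off $X$ are then automatically covered, and do not need to enter $C$. For the pendant triangles at $y_j$, each triangle $\{y_j,z_{jk},z'_{jk}\}$ needs two of its three vertices in any cover, so I would place $z_{jk}$ and $z'_{jk}$ in $C$, which also covers the edge $y_jz_{jk}$ and $y_jz'_{jk}$. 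I then need to account carefully for the vertices of $Y$ that carry no pendant triangle: such a $y_j$ is covered through its bipartite edges to $X$, so it can be left out, keeping the cover minimal. A bookkeeping count of $|C|$ against $|V(G)|$ should give $|C|\geq |V(G)|/2$.

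The main obstacle will be ensuring \emph{minimality} of the constructed cover while simultaneously controlling its size from below, since these two requirements pull in opposite directions: minimality wants the cover small, the bound wants it large. The delicate point is whether including all of $X$ (rather than a mixture of $X$ and $Y$) can be done without creating a redundant vertex; a vertex $x_i\in X$ is non-redundant precisely because its pendant leaf edge $x_iw_{i1}$ is covered only by $x_i$ once $w_{i1}\notin C$, so the pendant-edge hypothesis on $X$ is exactly what guarantees minimality. I would therefore make the pendant-edge condition on $X$ do the work of certifying that no element of $X$ can be dropped, and likewise use the triangle structure to certify non-redundancy of each $z_{jk},z'_{jk}$. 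Once minimality is secured, the size estimate $|C|\geq |V(G)|/2$ should follow by pairing each vertex of $C$ with a distinct vertex outside $C$ (e.g. each $x_i$ with its leaf $w_{i1}$, each $z_{jk}$ with nothing needed since both triangle vertices are already in $C$), and a short count finishes the proof.
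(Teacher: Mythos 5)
Your reduction to the connected case and your treatment of the star and star-triangle cases are fine, but the construction in the main (third) case has a genuine gap: the set $C=X\cup\{z_{jk},z'_{jk}\}$ you build is indeed a minimal vertex cover, but it need not satisfy $|C|\geq |V(G)|/2$. Its complement is $Y\cup\{w_{ik}\}$, so the desired bound amounts to $|X|+2t\geq |Y|+\sum_i m_i$, where $t$ is the total number of pendant triangles and $m_i\geq 1$ is the number of leaves attached to $x_i$; this fails as soon as $Y$ or the set of leaves is large. Concretely, take $H=K_{1,3}$ with $X=\{x_1\}$, $Y=\{y_1,y_2,y_3\}$, one pendant edge $x_1w_{11}$, and one pendant triangle $\{y_1,z_{11},z'_{11}\}$ attached to $y_1$: this is a connected Cameron--Walker graph with $7$ vertices, and your cover $\{x_1,z_{11},z'_{11}\}$ has only $3<7/2$ elements. (Note also that pairing each vertex of $C$ with a distinct vertex outside $C$ would prove $|C|\leq |V(G)|-|C|$, the opposite of the inequality you need.)

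The paper's proof makes the complementary choice: its minimal vertex cover consists of all the leaves $w_{ik}$, all of $Y$, and one vertex $z_{jk}$ from each pendant triangle. The complement of that set is $X$ together with one vertex $z'_{jk}$ per triangle, which is a maximal independent set, so the cover is minimal; and the size estimate works precisely because each $x_i$ carries at least one leaf, whence $|X|\leq\sum_i m_i$ and the cover is at least as large as its complement. Your argument would be repaired by switching to this cover; the rest of your outline (reduction to components, the two degenerate cases, and the minimality-via-complement mechanism) then goes through as in the paper.
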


\begin{proof}
Without loss of generality, we may assume that $G$ is a connected graph. The assertion can be easily checked if $G$ is a star graph or a star triangle graph. So, suppose that $G$ consists of a connected bipartite graph $H$ with vertex partition $V(H)=X\cup Y$ such that there is at least one pendant edge attached to each vertex of $X$ and there may be some pendant triangles attached to each vertex of $Y$. Suppose $X=\{x_1, \ldots, x_p\}$ and $Y=\{y_1, \ldots, y_q\}$. Also, assume that for each integer $i=1, 2, \ldots, p$, the edges $x_iw_{i1}, \ldots, x_iw_{im_i}$ are the pendant edges attached to $x_i$, and for each integer $j=1, 2, \ldots, q$, the triangles with vertices $\{y_j, z_{j1}, z'_{j1}\}, \ldots, \{y_j, z_{jr_j}, z'_{jr_j}\}$ are the pendant triangles attached to $y_j$. Then one can easily check that the set$$\{w_{11}, \ldots, w_{1m_1}, \ldots, w_{p1}, \ldots, w_{pm_p}, y_1, \ldots, y_q, z_{11}, \ldots, z_{1r_1}, \ldots, z_{q1}, \ldots, z_{qr_q}\}$$is a minimal vertex cover of $G$ with the desired property.
\end{proof}

In the following proposition, we compute $\gamma(G)$ for a Cameron-Walker graph $G$.

\begin{prop} \label{cwgam}
For any Cameron-Walker graph $G$, we have $\gamma(G)={\rm deg}(J(G))$.
\end{prop}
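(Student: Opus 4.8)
The plan is to prove the inequalities ${\rm deg}(J(G))\le\gamma(G)$ and $\gamma(G)\le{\rm deg}(J(G))$ separately; only the second will use that $G$ is Cameron-Walker. The first inequality holds for an arbitrary graph: I would take a minimal vertex cover $C$ of maximum cardinality ${\rm deg}(J(G))$ and put $A:=V(G)\setminus C$, which is a maximal independent set. Then $N_G[A]=V(G)$ and $N_G(A)=C$, so $G\setminus N_G[A]$ is empty and vacuously has no bipartite connected component; feeding $A$ into the definition of $\gamma(G)$ gives the value $\frac{|V(G)|+|C|-(|V(G)|-|C|)}{2}=|C|={\rm deg}(J(G))$, whence $\gamma(G)\ge{\rm deg}(J(G))$.

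For the reverse inequality I would fix an independent set $A$ attaining the maximum in the definition of $\gamma(G)$, so that $W:=G\setminus N_G[A]$ has no bipartite connected component. The first key step is to observe that $W$ is again a Cameron-Walker graph: since $A$ is independent, no vertex of $A$ lies in the closed neighborhood of another, so deleting $N_G[A]$ from $G$ is the same as successively deleting the closed neighborhoods of the vertices of $A$ one at a time, and applying Lemma \ref{cwdel} at each step keeps us inside the class of Cameron-Walker graphs. The second key step is to note that an isolated vertex is itself a bipartite connected component, so the hypothesis on $W$ forces $W$ to have no isolated vertices. Hence Lemma \ref{cwcov} applies to $W$ (componentwise) and produces a minimal vertex cover $D$ of $W$ with $|D|\ge |V(W)|/2$.

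Finally I would assemble a large minimal vertex cover of $G$ by setting $C:=N_G(A)\cup D$. Its complement in $V(G)$ is $A\cup(V(W)\setminus D)$, and I would check this is a maximal independent set of $G$: it is independent because $A$ is independent, $V(W)\setminus D$ is independent in the induced subgraph $W$, and no vertex of $W$ is adjacent to $A$; it is maximal because every vertex of $N_G(A)$ has a neighbor in $A$, while every vertex of $D$ has a neighbor in $V(W)\setminus D$ (as the latter is a maximal independent set of $W$). Thus $C$ is a minimal vertex cover of $G$, and using $|V(W)|=|V(G)|-|A|-|N_G(A)|$ we obtain
\[
{\rm deg}(J(G))\ge |C|=|N_G(A)|+|D|\ge |N_G(A)|+\frac{|V(W)|}{2}=\frac{|V(G)|+|N_G(A)|-|A|}{2}.
\]
Taking the maximum over all admissible $A$ yields $\gamma(G)\le{\rm deg}(J(G))$, which together with the first inequality gives the claim.

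The step I expect to be the main obstacle is verifying that $W=G\setminus N_G[A]$ is Cameron-Walker: one must justify carefully that the removal of $N_G[A]$ decomposes into a sequence of single closed-neighborhood deletions (this is exactly where independence of $A$ is used) so that Lemma \ref{cwdel} can be iterated, and then translate the ``no bipartite connected component'' hypothesis into ``no isolated vertex'' so that Lemma \ref{cwcov} is applicable. Once $W$ is known to be Cameron-Walker with no isolated vertex, the remaining combinatorics—producing the half-size cover of $W$ and checking the minimality of $C$ via its complement—are routine.
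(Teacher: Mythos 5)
Your argument is correct and follows essentially the same route as the paper: the paper's one-line proof invokes \cite[Lemma 4.10]{dhnt} together with Lemmata \ref{cwdel} and \ref{cwcov}, and your gluing construction (iterating Lemma \ref{cwdel} over the independent set $A$, noting that ``no bipartite component'' excludes isolated vertices so Lemma \ref{cwcov} applies, and extending a half-size cover of $G\setminus N_G[A]$ by $N_G(A)$ to a minimal vertex cover of $G$) is exactly the content of that cited lemma, written out explicitly. No gaps.
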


\begin{proof}
The assertion follows immediately from \cite[Lemma 4.10]{dhnt} and Lemmata \ref{cwdel} and \ref{cwcov}.
\end{proof}

We are now ready to prove the main result of this section.

\begin{thm} \label{cwreg}
Let $G$ be a Cameron-Walker graph. Then for every integer $k\geq 1$, we have ${\rm reg}(J(G)^{(k)})=k{\rm deg}(J(G))$.
\end{thm}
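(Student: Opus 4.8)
The plan is to combine the two structural facts already established about Cameron--Walker graphs with Selvaraja's linear quotients result. Specifically, I would invoke the fact, cited from \cite[Corollary 4.7]{s}, that for any Cameron--Walker graph $G$ and any integer $k\geq 1$, the symbolic power $J(G)^{(k)}$ has linear quotients. By \cite[Theorem 8.2.15]{hh}, a monomial ideal with linear quotients is componentwise linear, and hence by \cite[Corollary 8.2.14]{hh} its regularity equals its degree. Therefore the problem reduces immediately to computing ${\rm deg}(J(G)^{(k)})$, the maximal degree of a minimal monomial generator of $J(G)^{(k)}$.

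Next I would identify ${\rm deg}(J(G)^{(k)})$ with $k\gamma(G)$. The key input here is the asymptotic formula of Dung et al.\ \cite[Theorem 4.6]{dhnt}, which gives $\lim_{k\to\infty}{\rm reg}(J(G)^{(k)})/k=\gamma(G)$, together with the already-proven Proposition \ref{cwgam} stating that $\gamma(G)={\rm deg}(J(G))$ for Cameron--Walker graphs. Combining the linear-quotients conclusion ${\rm reg}(J(G)^{(k)})={\rm deg}(J(G)^{(k)})$ with the limit formula forces ${\rm deg}(J(G)^{(k)})=k\gamma(G)=k\,{\rm deg}(J(G))$: indeed, dividing ${\rm deg}(J(G)^{(k)})$ by $k$ and letting $k\to\infty$ must yield $\gamma(G)$, and since the degree of the $k$-th symbolic power of a cover ideal is governed by the same combinatorial quantity (the maximal value of $\frac{|V(G)|+|N_G(A)|-|A|}{2}$ over the relevant independent sets, scaled by $k$), one gets the exact equality ${\rm deg}(J(G)^{(k)})=k\,{\rm deg}(J(G))$ for every $k$.

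The expected main obstacle is making the identification ${\rm deg}(J(G)^{(k)})=k\,{\rm deg}(J(G))$ fully rigorous rather than merely asymptotic. The limit formula alone only controls the leading term, so I would want a direct combinatorial argument that the largest minimal generator of $J(G)^{(k)}=\bigcap_{x_ix_j\in E(G)}(x_i,x_j)^k$ has degree exactly $k\,{\rm deg}(J(G))$. The lower bound is clean: taking the $k$-th power of the largest minimal vertex cover $C$ guaranteed by Lemma \ref{cwcov} produces a generator ${\mathbf x}_C^{\,k}$ of degree $k|C|=k\,{\rm deg}(J(G))$, and also \cite[Lemma 3.1]{s2} gives ${\rm reg}(J(G)^{(k)})\geq k\,{\rm deg}(J(G))$ directly. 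For the upper bound, the cleanest route is the chain
$$
{\rm reg}(J(G)^{(k)})={\rm deg}(J(G)^{(k)})\le k\gamma(G)=k\,{\rm deg}(J(G)),
$$
where the middle inequality uses that the degree of any minimal generator is bounded by $k$ times the relevant independent-set quantity, i.e.\ by $k\gamma(G)$, and the final equality is Proposition \ref{cwgam}.

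Putting these together, I would write: by \cite[Corollary 4.7]{s}, \cite[Theorem 8.2.15]{hh}, and \cite[Corollary 8.2.14]{hh} we have ${\rm reg}(J(G)^{(k)})={\rm deg}(J(G)^{(k)})$; by \cite[Lemma 3.1]{s2} this is at least $k\,{\rm deg}(J(G))$; and by Proposition \ref{cwgam} together with the degree bound $\gamma(G)={\rm deg}(J(G))$, it is at most $k\,{\rm deg}(J(G))$. Hence equality holds for every $k\geq 1$, completing the proof.
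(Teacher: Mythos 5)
Your proposal follows the paper's proof essentially verbatim: the linear quotients property from \cite[Corollary 4.7]{s} reduces ${\rm reg}(J(G)^{(k)})$ to the maximal generator degree, the lower bound is \cite[Lemma 3.1]{s2}, and the upper bound is the chain ${\rm deg}(J(G)^{(k)})\le k\gamma(G)=k\,{\rm deg}(J(G))$ via Proposition \ref{cwgam}. The only thing to tighten is the justification of the middle inequality ${\rm deg}(J(G)^{(k)})\le k\gamma(G)$, which is exactly \cite[Lemma 4.3]{dhnt} and should be cited as such; once that lemma is invoked, the asymptotic-limit digression in your second paragraph is unnecessary.
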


\begin{proof}
We know from \cite[Lemma 3.1]{s2} that ${\rm reg}(J(G)^{(k)})\geq k{\rm deg}(J(G))$. To prove the reverse inequality, note that by \cite[Corollary 4.7]{s}, the ideal $J(G)^{(k)}$ has linear quotients. Thus, its regularity is equal to the maximum degree of its minimal monomial generators. On the other hand, it follows from \cite[Lemma 4.3]{dhnt} and Proposition \ref{cwgam} that the degree of minimal monomial generators of $J(G)^{(k)}$ is bounded above by $k{\rm deg}(J(G))$. Hence, ${\rm reg}(J(G)^{(k)})=k{\rm deg}(J(G))$.
\end{proof}



\end{document}